\newtheorem{theorem}{Theorem}[section]
\newtheorem{lemma}[theorem]{Lemma}
\newtheorem{corollary}[theorem]{Corollary}
\newtheorem{conjecture}[theorem]{Conjecture}
\newcommand{\Hom}{\textup{Hom}}
\title{Cluster automorphisms and compatibility of cluster variables}
\author{Ibrahim Assem, Ralf Schiffler and Vasilisa Shramchenko}
\dedicatory{Dedicated to the memory of Andrei Zelevinsky} 
\begin{document}
\begin{abstract}
In this paper, we introduce a notion of unistructural cluster algebras, for which the set of cluster variables uniquely determines the clusters. We prove that cluster algebras of Dynkin type and cluster algebras of rank 2 are unistructural, then prove that if $\mathcal{A}$ is unistructural or of Euclidean type, then
$f: \mathcal{A}\to \mathcal{A}$ is a cluster automorphism if and only if $f$ is an automorphism of the ambient field which restricts to a permutation of the  cluster variables. 
In order to prove this result, we also investigate the Fomin-Zelevinsky conjecture that two cluster variables are compatible if and only if one does not appear in the denominator of the Laurent expansions of the other.
\end{abstract}
\maketitle

\section{Introduction}\label{intro}

Cluster algebras have been introduced in 2002 by Fomin and Zelevinsky \cite{FZ1} and since then have been proved to be related to various areas of mathematics like Combinatorics, Representation Theory of Algebras, Mathematical Physics, Teichm\"uller Theory and many others. 

In \cite{ASS1} the authors have introduced the notion of cluster automorphism. Let $\mathcal A$ be a cluster algebra. A cluster automorphism of $\mathcal A$ is a $\mathbb Z$-automorphism of the algebra $\mathcal A$ mapping a cluster to a cluster and commuting with mutations. In \cite{S}, I. Saleh defines another notion of automorphism of a cluster algebra: this is an automorphism of the ambient field which restricts to a permutation of the set of cluster variables. 

In the present paper, we investigate the following conjecture. 

\begin{conjecture}
\label{conjecture1}
Let $\mathcal A$ be a cluster algebra. Then $f: \mathcal A \to \mathcal A$ is a cluster automorphism if and only if $f$ is an automorphism of the ambient field which restricts to a permutation of the set of cluster variables. 
\end{conjecture}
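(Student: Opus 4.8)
The plan is to treat the two implications of Conjecture~\ref{conjecture1} separately, and to reduce everything to a single geometric assertion about how $f$ acts on clusters. The implication ``cluster automorphism $\Rightarrow$ field automorphism permuting cluster variables'' is the routine one: a $\mathbb Z$-automorphism of $\mathcal A$ extends canonically to an automorphism $f$ of the ambient field $\mathcal F=\textup{Frac}(\mathcal A)$, and since by definition $f$ carries clusters to clusters it carries the elements of clusters, that is, the cluster variables, into the set of cluster variables; applying the same reasoning to $f^{-1}$ shows that this restriction is a bijection, hence a permutation. So all the content lies in the converse.

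For the converse, let $f$ be an automorphism of $\mathcal F$ whose restriction to the set $\mathcal X$ of cluster variables is a permutation $\sigma$. First I would note that $f$ restricts to a ring automorphism of $\mathcal A$ itself, which is immediate when $\mathcal A$ is generated over $\mathbb Z$ by $\mathcal X$ (with nontrivial coefficients one also has to track the frozen variables). The crux is then to show that $f$ maps every cluster to a cluster, and I would obtain this from either of two inputs, matching the two families of cluster algebras appearing in the abstract. First, if $\mathcal A$ is \emph{unistructural}, use transport of structure: the seeds $f(\Sigma)$, as $\Sigma$ ranges over the seeds of $\mathcal A$, assemble into a second cluster algebra structure on the ring $\mathcal A$ whose set of cluster variables is $\sigma(\mathcal X)=\mathcal X$; by unistructurality this structure has the same clusters as $\mathcal A$, so $f$ permutes the clusters. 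Second, in the Euclidean case I would instead argue through compatibility: one establishes (this is the part of the Fomin--Zelevinsky conjecture that the paper investigates) that two cluster variables are compatible, i.e. lie in a common cluster, if and only if neither appears in the denominator of the Laurent expansion of the other, and then observes that ``not appearing in a Laurent denominator'' can be formulated using only $\mathcal F$ and $\mathcal X$, so that the field automorphism $f$ preserves it. Since a cluster is a maximal set of pairwise compatible cluster variables and $f$ is a bijection of $\mathcal X$ preserving pairwise compatibility, $f$ again permutes the clusters.

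Once $f$ is known to permute the clusters, the proof finishes quickly: $f$ and $f^{-1}$ both permute clusters, and two clusters are related by a single mutation precisely when they share $n-1$ cluster variables, a relation manifestly preserved by a permutation of $\mathcal X$ that permutes clusters; hence $f$ commutes with mutations and is a cluster automorphism in the sense of \cite{ASS1}.

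I expect the main obstacle to be exactly the compatibility input used in the Euclidean case, and this is why the conjecture is proved only for unistructural and for Euclidean cluster algebras rather than in general: showing that membership of a cluster variable in a Laurent denominator is \emph{intrinsic}, i.e. independent of the chosen cluster structure, is the Fomin--Zelevinsky conjecture itself, so it cannot be assumed for arbitrary $\mathcal A$. A secondary point that will need care outside finite type is the identity ``cluster $=$ maximal set of pairwise compatible cluster variables''; in the Euclidean case I would derive it from an explicit description of denominator vectors and of the exchange graph, while for Dynkin type and rank $2$ it is subsumed by the unistructurality results already available.
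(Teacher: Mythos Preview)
Your overall strategy matches the paper's: the easy implication is dispatched the same way, the unistructural case uses exactly the transport-of-structure argument the paper gives (the second lemma of Section~\ref{sect 5}), and the reduction ``permutes clusters $\Rightarrow$ cluster automorphism'' is precisely Corollary~2.7 of \cite{ASS1}, just as you say.

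The divergence, and the gap, is in the Euclidean case. You assert that the compatibility criterion ``neither appears in the denominator of the Laurent expansion of the other'' can be formulated using only $\mathcal F$ and $\mathcal X$, and is therefore preserved by $f$. But the criterion actually established in Theorem~\ref{theorem2} is \emph{not} of this shape: it reads ``for every \emph{cluster} $\mathbf x$ containing $x$, the expansion $\mathcal L(x',\mathbf x)$ has no $x$ in its denominator'', and this quantifies over clusters, not over arbitrary transcendence bases contained in $\mathcal X$, so it is not visibly intrinsic to the pair $(\mathcal F,\mathcal X)$. The paper avoids this by a different manoeuvre (Lemma~\ref{lemma_5.1}): assuming $y=f(u)$ and $x=f(v)$ are incompatible, it fixes one cluster $\mathbf x\ni x$, uses Theorem~\ref{theorem2} to write a single identity
\[
y=\frac{p(\mathbf x)}{x^{d}\,m(\mathbf x\setminus\{x\})}\qquad (d>0,\ x\nmid p),
\]
applies the field automorphism $f^{-1}$ to this identity to obtain an expression of $u$ in the algebraically independent family $f^{-1}(\mathbf x)\ni v$ with $v^{d}$ in the denominator, and then confronts this with Lemma~\ref{lem 2.1} and the compatibility of $u,v$ inside $\mathbf u$. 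So rather than proving an abstract ``intrinsic'' statement, the paper transports one concrete Laurent identity through $f^{-1}$. Note also that Theorem~\ref{theorem2} is asymmetric in $x$ and $x'$ (parts~(a) and~(b) go in opposite directions), and the paper's argument correspondingly splits into the case where $x=f(v)$ is transjective and the case where both $x$ and $y$ are regular; your outline suppresses this case distinction.
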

One implication of this conjecture follows from the fact that cluster automorphisms map each cluster to a cluster \cite[Corollary 2.7]{ASS}, so in particular cluster automorphisms permute the cluster variables.

The other implication led us to consider the following more general question. We say that a cluster algebra is {\it unistructural} if the set of cluster variables determines the cluster algebra structure, that is, there exists a unique decomposition of the set of cluster variables into clusters. We show that if a cluster algebra is unistructural, then Conjecture~\ref{conjecture1} holds true for this cluster algebra. The following conjecture is natural.

\begin{conjecture}
\label{conjecture2}
Any cluster algebra is unistructural. 
\end{conjecture}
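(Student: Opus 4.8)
The plan is to make the set $\mathcal X$ of cluster variables, regarded as a subset of the ambient field $\mathcal F$, remember the clusters \emph{intrinsically}, that is, via a property that mentions no seed. Concretely, I would try to prove that an $n$-element subset $C\subseteq\mathcal X$ is a cluster if and only if every cluster variable is a Laurent polynomial in the elements of $C$, i.e.\ $\mathcal X\subseteq\mathbb Z[C^{\pm1}]$. Such a $C$ automatically generates $\mathcal F$ and hence is algebraically independent, so the condition really does single out candidate clusters; and since it refers only to $\mathcal X\subseteq\mathcal F$, any two cluster structures sharing the set $\mathcal X$ would then have the same clusters, which is exactly unistructurality. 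The Laurent phenomenon of Fomin--Zelevinsky gives the ``only if'' direction at once, so the whole difficulty lies in the converse.

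For the converse I would route the argument through compatibility. The first, and main, step is to establish the Fomin--Zelevinsky conjecture that two cluster variables $x$ and $y$ lie in a common cluster if and only if $x$ does not occur in the denominator of the Laurent expansion of $y$ with respect to a cluster containing $x$, together with the preliminary point that whether $x$ occurs in that denominator is independent of the cluster containing $x$ that is chosen, so that the relation ``$x$ is compatible with $y$'' is well defined and symmetric. Granting this, one shows that the clusters are exactly the pairwise-compatible $n$-subsets of $\mathcal X$ --- equivalently, the inclusion-maximal pairwise-compatible subsets, all of which then have cardinality $n$ --- in other words, that the cluster complex is the clique complex of the compatibility graph. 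Now if $\mathcal X\subseteq\mathbb Z[C^{\pm1}]$ for an $n$-subset $C$, then no $c\in C$ appears in a denominator relative to $C$, so $C$ is pairwise compatible of size $n$ and therefore a cluster; this supplies the missing ``if'' direction, hence the intrinsic characterization, hence Conjecture~\ref{conjecture2}.

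The obstacle is precisely this denominator criterion: the Fomin--Zelevinsky compatibility conjecture and the seed-independence of denominator vectors are open in general, so I would expect to prove Conjecture~\ref{conjecture2} only in cases where compatibility can be controlled. In rank $2$ the cluster variables form a doubly infinite string $(x_m)_{m\in\mathbb Z}$ whose clusters are the consecutive pairs $\{x_m,x_{m+1}\}$, so the exchange relations let one detect directly which pairs are compatible and recover the string order on $\mathcal X$ up to reversal, which gives uniqueness of the decomposition. In Dynkin type one has instead the classification of cluster variables by almost positive roots, an explicit combinatorial compatibility degree, and the fact that the cluster complex is flag; these let the same two steps --- an intrinsic description of compatibility, then clusters as maximal cliques --- be carried through. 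Even with compatibility in hand, one residual point deserves care: a pairwise-compatible family of size $n$ must actually be contained in a cluster. In finite type this is part of the known structure of the cluster complex, but in general it is another facet of the same conjecture, which is why Conjecture~\ref{conjecture2} can only be stated as a conjecture for now.
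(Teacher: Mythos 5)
This statement is left as a conjecture in the paper: only the Dynkin and rank~$2$ cases are actually proved (Theorems~\ref{thm_DynkinUni} and the rank-$2$ theorem), so no proposal could be expected to close it in general, and you correctly locate the main obstruction in the Fomin--Zelevinsky compatibility conjecture (Conjecture~\ref{conjecture3} here). However, your proposed reduction has a genuine gap at its pivot. You claim that if $C\subseteq\mathcal X$ has $n$ elements and $\mathcal X\subseteq\mathbb Z[C^{\pm1}]$, then ``no $c\in C$ appears in a denominator relative to $C$, so $C$ is pairwise compatible.'' But Conjecture~\ref{conjecture3} tests compatibility of $c$ and $y$ by expanding $y$ with respect to a \emph{cluster} containing $c$; at this stage $C$ is only known to be an algebraically independent $n$-subset of $\mathcal X$, not a cluster, so the hypothesis $\mathcal X\subseteq\mathbb Z[C^{\pm1}]$ says nothing about the expansions the conjecture actually refers to. Even granting Conjecture~\ref{conjecture3} in full, your intrinsic characterization of clusters as maximal Laurent-generating subsets does not follow; it is a further (and independently nontrivial) assertion, as is the statement that every pairwise-compatible $n$-subset is a cluster, which you do flag at the end. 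So the chain ``FZ compatibility $\Rightarrow$ intrinsic characterization $\Rightarrow$ unistructurality'' is not established even conditionally.

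For the two cases that can be settled, the paper's route is different from yours and avoids the clique-complex machinery. In Dynkin type it counts cluster variables to pin down the type, builds a field automorphism $f$ matching an initial seed of one structure to an initial seed of the other (both on the same quiver $Q$), observes that $f$ commutes with mutation and hence permutes $\mathcal X$, and then invokes Theorem~\ref{theorem1} (itself resting on the transjective denominator criterion of Theorem~\ref{thm 2.2}) to conclude $f$ is a cluster automorphism, so it carries clusters to clusters. In rank $2$ it uses the Lee--Schiffler expansion formula to show that $x_1,x_2,(x_1^r+1)/x_2,(x_2^r+1)/x_1$ are the only cluster variables not having both initial variables in their denominators, and then a direct exchange-relation computation excludes any additional cluster containing $x_1$. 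Your root-theoretic/flag-complex argument for finite type and your ``string'' argument for rank $2$ are plausible alternatives, but as written they still lean on the unproved intermediate step above; to make them work you would need to argue compatibility directly from the known combinatorics rather than through the Laurent-generation criterion.
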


In the present paper, we prove that cluster algebras of Dynkin type are unistructural and so are cluster algebras of rank $2$. 

Our first result asserts the validity of conjecture \ref{conjecture1} in several cases: 

\begin{theorem}
\label{theorem1}
Let $\mathcal{A}$ be a cluster algebra of Dynkin or Euclidean type or of rank $2$ or unistructural. Then $f: \mathcal{A}\to \mathcal{A}$ is a cluster automorphism if and only if $f$ is an automorphism of the ambient field which restricts to a permutation of the set of cluster variables. 
\end{theorem}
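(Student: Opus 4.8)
The ``only if'' direction is already available: a cluster automorphism of $\mathcal{A}$ maps every cluster to a cluster by \cite[Corollary 2.7]{ASS}, hence permutes the cluster variables, and by definition it is a $\mathbb{Z}$-automorphism of $\mathcal{A}$, which extends uniquely to an automorphism of the ambient field $\mathcal{F}$. So the substance of the statement is the ``if'' direction, and the plan is to isolate the following reduction as the core of the matter: \emph{if $\mathcal{A}$ is unistructural, then every automorphism $f$ of $\mathcal{F}$ that restricts to a permutation of the set of cluster variables is a cluster automorphism.} Granting this, the Dynkin and rank $2$ cases follow immediately from the unistructurality results announced above, and only the Euclidean case needs a separate argument.

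To prove the reduction I would transport the seed pattern of $\mathcal{A}$ along $f$. Fix the $n$-regular tree $\mathbb{T}_n$ and the assignment $t \mapsto (\mathbf{x}_t, B_t)$ defining $\mathcal{A}$, and set $\widetilde{\mathbf{x}}_t = f(\mathbf{x}_t)$ and $\widetilde{B}_t = B_t$. Because $f$ is a ring homomorphism, applying it to each exchange relation $x_{k;t}\,x'_{k;t} = \prod_i x_{i;t}^{[b^t_{ik}]_+} + \prod_i x_{i;t}^{[-b^t_{ik}]_+}$ shows that $t \mapsto (\widetilde{\mathbf{x}}_t, \widetilde{B}_t)$ is again a seed pattern; it defines a cluster algebra $\widetilde{\mathcal{A}} = f(\mathcal{A}) = \mathcal{A}$ inside $\mathcal{F}$, whose set of cluster variables is the image under $f$ of the set of cluster variables of $\mathcal{A}$, hence is that same set. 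By unistructurality this common set of cluster variables admits a unique decomposition into clusters, so $\{\widetilde{\mathbf{x}}_t\}_t = \{\mathbf{x}_s\}_s$ as sets of clusters; that is, $f$ carries every cluster of $\mathcal{A}$ to a cluster of $\mathcal{A}$. To finish, pick a cluster $\mathbf{x} = \mathbf{x}_{t_0}$, write $f(\mathbf{x}) = \mathbf{x}_{s_0}$ and $f(x_{i;t_0}) = x_{\sigma(i);s_0}$ for a permutation $\sigma$; then for each $k$ the cluster $f(\mu_k \mathbf{x})$ shares the $n-1$ cluster variables $\mathbf{x}_{s_0}\setminus\{x_{\sigma(k);s_0}\}$ with $\mathbf{x}_{s_0}$ and is distinct from it, so by standard facts about the exchange graph (two clusters sharing $n-1$ cluster variables differ by a single mutation) we get $f(\mu_k \mathbf{x}) = \mu_{\sigma(k)}(\mathbf{x}_{s_0}) = \mu_{\sigma(k)}(f(\mathbf{x}))$. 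This is exactly the condition for $f$ to be a cluster automorphism.

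For the Euclidean case, unistructurality is not yet at our disposal, so instead I would make the compatibility relation itself intrinsic to $\mathcal{A}$ by establishing, in this type, the Fomin--Zelevinsky criterion quoted in the abstract: two cluster variables lie in a common cluster if and only if neither occurs in the denominator of the Laurent expansion of the other with respect to some (equivalently, any) cluster. Since ``occurring in a denominator'' is a property of a pair of elements of $\mathcal{A}$, this makes the compatibility relation, and hence (the clusters of an acyclic cluster algebra being exactly the maximal sets of pairwise compatible cluster variables) the decomposition of the set of cluster variables into clusters, determined by $\mathcal{A}$; thus $\mathcal{A}$ is unistructural and the reduction above applies. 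I would carry out the denominator bookkeeping through the additive categorification by the cluster category of a tame hereditary algebra, where a cluster variable corresponds to a rigid indecomposable object, its denominator vector to the dimension vector of the corresponding module, and the task becomes showing that the support of this dimension vector detects exactly the indecomposables that fail to be $\operatorname{Ext}$-orthogonal to it.

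I expect the genuine difficulties to lie outside the formal machinery above. The first is the unistructurality of cluster algebras of Dynkin and of rank $2$, which must be proved before anything else and on which the whole scheme rests; here one has to control denominator (or compatibility-degree) vectors well enough to recover all clusters from the bare set of cluster variables. The second is the denominator analysis in the Euclidean case, where the delicate point is the behaviour of the cluster variables attached to regular modules in the tubes: the quasi-periodicity of the tubes makes it non-obvious that the denominator vector faithfully encodes compatibility, and treating the exceptional tubes alongside the one-parameter family of homogeneous tubes is the real obstacle. By contrast, the pushforward construction and the verification that $f$ commutes with mutations are routine once unistructurality is in hand.
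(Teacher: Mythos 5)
Your treatment of the unistructural case is essentially the paper's own second lemma: transport the seed pattern along $f$, note that the resulting structure has the same set of cluster variables, invoke unistructurality to conclude that $f$ sends clusters to clusters, and finish by the criterion that an ambient-field automorphism sending clusters to clusters is a cluster automorphism (the paper cites \cite[Corollary 2.7]{ASS1} for this last step rather than re-deriving commutation with mutations, but your exchange-graph argument is an acceptable substitute). The gap lies in the other three cases. For Dynkin and rank $2$ you reduce to the unistructurality of those types, which you explicitly defer as ``the first genuine difficulty''; but in the paper the logical order is the \emph{reverse}: Theorem~\ref{theorem1} for Dynkin type is proved first, by a direct denominator argument, and unistructurality of Dynkin type is then deduced from it (Theorem~\ref{thm_DynkinUni}). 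As organized, your plan is therefore either circular or requires an independent proof of Dynkin unistructurality that you do not supply. (Rank $2$ is less serious, since the paper does give an independent proof via the Lee--Schiffler expansion formula, but you would still have to produce that argument.)

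The Euclidean case has a second, more structural problem. You propose to establish the full symmetric Fomin--Zelevinsky criterion in Euclidean type and conclude that compatibility, hence the cluster decomposition, is intrinsic, hence that $\mathcal{A}$ is unistructural. But the criterion quantifies over \emph{clusters of the given structure} (``for any cluster $\mathbf{x}$ containing $x$\dots''), so it does not by itself show that a second seed pattern with the same set of cluster variables induces the same compatibility relation; this is exactly the crux, and indeed the paper does not claim (and leaves open) unistructurality in Euclidean type even though it proves a version of the criterion there. Note also that the paper only obtains an asymmetric form of the criterion (Theorem~\ref{theorem2}), which is why Conjecture~\ref{conjecture4} is recorded; your sketch assumes the stronger symmetric statement. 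The paper's actual argument for the Dynkin, Euclidean and rank-$2$ cases bypasses unistructurality entirely: if $f(\mathbf{u})$ is not a cluster, then (these algebras being categorified by a cluster category) some pair $u,v\in\mathbf{u}$ has incompatible images $y=f(u)$ and $x=f(v)$; Theorems~\ref{thm 2.2} and~\ref{thm 3.3} force $x$ into the denominator of $\mathcal{L}(y,\mathbf{x})$, and applying the field automorphism $f^{-1}$ together with Lemma~\ref{lem 2.1} contradicts the compatibility of $u$ and $v$. You should either adopt this direct route or supply the missing unistructurality proofs and the missing implication from the denominator criterion to unistructurality.
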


In the course of the proof, we have found that these conjectures are related to Conjecture $7.4 (2)$ \cite{FZ4} of Fomin-Zelevinsky which we restate as follows. Recall that cluster variables are called {\it compatible} if there exists a cluster containing both of them. 

\begin{conjecture}
\label{conjecture3}
Let $\mathcal{A}$ be a cluster algebra and $x, x'$ be two cluster variables. Then $x$ and $x'$ are compatible if and only if, for any cluster $\mathbf{x}$ containing $x$, the expansion ${\mathcal L}(x', \mathbf{x})$ of $x'$ as Laurent polynomial in $\mathbf{x}$ (in reduced form) is of the form $\frac{P({\mathbf x})}{m({\mathbf x}\setminus \{x\})}$,
where $P$ is a polynomial in the variables of ${\mathbf x}$ and $m$ is a monomial in the variables of ${\mathbf x}$ excluding $x$.
\end{conjecture}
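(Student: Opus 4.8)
The plan is to reformulate the conjecture in terms of denominator vectors and then to settle it, in the cases where the cluster structure is under control, by matching denominators with the combinatorial or homological data that governs compatibility. For a cluster variable $x'$ and a cluster $\mathbf{x}=\{x_1,\dots,x_n\}$ with $x=x_i$, writing $\mathcal{L}(x',\mathbf{x})=P/(x_1^{d_1}\cdots x_n^{d_n})$ in reduced form, the hypothesis ``$x$ does not occur in the denominator'' is precisely ``$d_i\le 0$'', i.e. the $x$-coordinate of the denominator vector $\mathbf{d}(x'\|\mathbf{x})$ is non-positive. Thus Conjecture~\ref{conjecture3} reads: $x$ and $x'$ are compatible if and only if $d_x(x'\|\mathbf{x})\le 0$ for every cluster $\mathbf{x}\ni x$. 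Two remarks reduce the work: the cases $x'=x$ and $x'\in\mathbf{x}$ are immediate; and since both compatibility and the denominator condition are intrinsic to $\mathcal{A}$, I may treat each cluster $\mathbf{x}\ni x$ separately and take it as the initial seed, so that it suffices to compute $d_x(x'\|\mathbf{x})$ when $x$ belongs to the initial cluster.

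For Dynkin type both implications fall out together. After re-rooting so that $\mathbf{x}$ is the initial seed, the cluster variables are indexed by the almost positive roots $\Phi_{\ge -1}$, with $x=x[-\alpha_i]$ a negative simple root and $x'=x[\beta]$. By the Fomin--Zelevinsky denominator theorem, $d_x(x'\|\mathbf{x})$ equals the coefficient $[\beta:\alpha_i]$ of $\alpha_i$ in the simple-root expansion of $\beta$, which is exactly the compatibility degree $(-\alpha_i\|\beta)$. Since this number is non-negative and vanishes precisely when the two roots are compatible, $x$ occurs in the denominator of $x'$ if and only if $(-\alpha_i\|\beta)>0$, i.e. if and only if $x$ and $x'$ are not compatible. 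For rank $2$ I would argue by direct computation: the cluster variables form a $\mathbb{Z}$-indexed sequence governed by the rank-$2$ exchange recurrence, the clusters are the consecutive pairs, and I would compute the denominator vectors explicitly (as in the Sherman--Zelevinsky analysis) to verify that $x_m$ occurs in the denominator of $x_{m'}$ exactly when $\{x_m,x_{m'}\}$ is not a cluster.

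For Euclidean type I would pass to the additive categorification by the cluster category of a tame hereditary algebra. Choosing the seed $\mathbf{x}$ to correspond to a tilting object, the cluster variable $x'$ is the Caldero--Chapoton image of a rigid indecomposable $M$, its denominator vector with respect to $\mathbf{x}$ is the dimension vector of $M$, and compatibility of $x$ and $x'$ corresponds to the vanishing of the relevant $\mathrm{Ext}^1$. The conjecture then becomes the representation-theoretic assertion that the coordinate of the dimension vector indexed by $x$ is positive exactly when that $\mathrm{Ext}^1$ is non-zero, which I would establish by separating the transjective (preprojective and preinjective) modules from the regular ones and treating each pair of components in turn.

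I expect the genuine obstacle to be the regular modules lying in the non-homogeneous tubes: there $\mathrm{Ext}^1$ between indecomposables of the same tube is frequently non-zero, the dimension vectors repeat quasi-periodically, and the bookkeeping relating the self-extensions of tube modules to the support of the denominators is the delicate point. A related technical hazard is that the reduction ``assume $x$ lies in the initial cluster'' relies on the denominator-vector/dimension-vector identification holding from an arbitrary tilting seed, which must be checked for compatibility with mutation rather than assumed. Finally, the fully general Conjecture~\ref{conjecture3}, lacking a uniform root-theoretic or categorical model and a clean mutation rule for denominator vectors, remains the principal difficulty beyond the Dynkin, Euclidean and rank-$2$ cases.
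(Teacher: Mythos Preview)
The statement is labelled a conjecture; the paper does not prove it in general but establishes it for $x$ transjective in an acyclic cluster algebra (Theorem~\ref{thm 2.2}) and for $x$ regular in Euclidean type (Theorem~\ref{thm 3.3}). Your outline for these cases has a genuine gap precisely where the paper's main technical work lies.

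In the Euclidean step you write that ``its denominator vector with respect to $\mathbf{x}$ is the dimension vector of $M$'' and then reduce compatibility to an $\textup{Ext}^1$ computation. This identification fails in general: the Buan--Marsh--Reiten result requires that the tilting object $T$ have no regular summand of quasilength $r-1$ in a tube of rank $r$, and the paper gives an explicit $\tilde{\mathbb{A}}_{1,2}$ example where the denominator vector is $(1,1,1)$ while the dimension vector is $(1,2,1)$. You correctly flag this as a ``technical hazard'' but do not resolve it. The paper does: when $x$ corresponds to such a quasilength-$(r{-}1)$ object $T_1$, it completes $T_1$ by the sectional path to the mouth of the tube, computes $\langle S_1,\Hom_{\mathcal{C}}(T,M)\rangle=1$ directly through a careful analysis of morphisms in the tube (Lemmas~\ref{lem 3.1.2} and \ref{lem 3.2}, Corollary~\ref{cor 3.2}), and then invokes the positivity theorem to guarantee that the $\mathbf{e}=0$ summand of the cluster character $X^T_M$, which carries $x_1^{-1}$, is not cancelled by the other terms. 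Without an argument of this kind the Euclidean case is incomplete.

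Two smaller points. The easy direction (compatible $\Rightarrow$ no $x$ in the denominator for \emph{any} $\mathbf{x}\ni x$) deserves an explicit proof; the paper's is one line (Lemma~\ref{lem 2.1}): mutate from $\mathbf{x}$ to a cluster containing both $x$ and $x'$ without ever mutating at $x$. And your Dynkin argument, as phrased, applies the Fomin--Zelevinsky root/denominator identification at an \emph{arbitrary} seed $\mathbf{x}$, whereas that identification is stated for acyclic initial seeds. The paper sidesteps this by choosing a transjective slice cluster containing $x$ and working in the cluster category; since only one witness cluster is needed for the hard direction, your almost-positive-roots approach can be repaired the same way, and then gives a genuinely different, more combinatorial, route to the Dynkin case than the paper's categorical one.
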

In the sequel, we shall simply say that $\mathcal{L}(x',\mathbf{x})$ \emph{has no $x$ in the denominator} to express that $\mathcal{L}(x',\mathbf{x})$ is of the previous form.

This conjecture \ref{conjecture3} is proved for the case of cluster algebras arising from surfaces, see \cite[Theorem 8.6]{FST}.

We prove the following theorem. 

\begin{theorem}
\label{theorem2}
Let $\mathcal{A}_Q$ be an acyclic cluster algebra and $x, x'$ two cluster variables. 
	\begin{itemize}
		\item[(a)] If $x$ is transjective, then $x$ and $x'$ are compatible if and only if for any cluster $\mathbf x$ containing $x$, the Laurent expansion ${\mathcal L}(x', \mathbf{x})$ has no $x$ in its denominator. 
		\item [(b)] If $x$ is regular and $Q$ is Euclidean, then $x$ and $x'$ are compatible if and only if, for any cluster $\mathbf{x}'$ containing $x'$, the Laurent expansion ${\mathcal L}(x, \mathbf{x}')$ has no $x'$ in its denominator. 
	\end{itemize}
\end{theorem}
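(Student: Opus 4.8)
The plan is to exploit the representation-theoretic model of acyclic cluster algebras, via the cluster category $\mathcal{C}_Q$ and the cluster character (Caldero–Chapoton) map, where cluster variables correspond to rigid indecomposable objects and the denominator of the Laurent expansion $\mathcal{L}(x',\mathbf{x})$ records the dimension vector of the corresponding module relative to the tilting object giving $\mathbf{x}$. The forward implication in both (a) and (b) is the easy one: if $x$ and $x'$ are compatible, they lie in a common cluster $\mathbf{x}$, so $\mathcal{L}(x',\mathbf{x}) = x'$ is a monomial with no $x$ in the denominator, and this persists under mutation within clusters containing $x$ (one must check that if $\mathcal{L}(x',\mathbf{x})$ has no $x$ in the denominator, the same holds after mutating $\mathbf{x}$ in a direction other than $x$ — this follows from the exchange relation, since mutating in direction $y\neq x$ cannot introduce $x$ into any denominator). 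So the substance is the converse in each case.

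For part (a), assume $x$ is transjective (lies on the transjective/AR component), and suppose $x$ and $x'$ are \emph{not} compatible; I want to produce a cluster $\mathbf{x} \ni x$ in whose Laurent expansion $\mathcal{L}(x',\mathbf{x})$ the variable $x$ actually appears in the denominator. The key step is: since $x$ is transjective, there is a seed $\Sigma$ whose quiver is acyclic and in whose cluster $\mathbf{x}$ the variable $x$ is a source or otherwise occupies a controlled position; more precisely, transjective cluster variables can be "reached" so that after suitable mutation $x$ becomes the cluster variable $x_i$ associated to a vertex $i$ of an acyclic seed, and $x'$ then corresponds to an indecomposable module $M$ over the corresponding path algebra. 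The denominator of $\mathcal{L}(x',\mathbf{x})$ is $\prod x_j^{\dim_j M}$ (for transjective $x'$) or is controlled by the support of the module/object in the non-Dynkin case. Then "$x$ does not appear in the denominator" becomes "$\dim_i M = 0$", i.e. $M$ is supported away from vertex $i$, i.e. $M$ is a module over $Q$ with vertex $i$ deleted; and one argues that a rigid module supported away from $i$ is $\mathrm{Ext}$-compatible with $S_i$-related rigid objects, hence the corresponding cluster variables are compatible — contradiction. I would run this through the bijection between clusters and tilting objects and the characterization of compatibility as vanishing of $\mathrm{Ext}^1$ in $\mathcal{C}_Q$, handling the transjective-vs-regular and transjective-vs-transjective subcases of $x'$ separately (in the regular case of $x'$ one uses that the regular component structure and the known shape of denominator vectors, cf. the computations of Caldero–Keller / Buan–Marsh–Reiten, still give the support statement).

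For part (b), $Q$ is Euclidean and $x$ is regular; now I reverse the roles and look at clusters $\mathbf{x}'$ containing $x'$, expanding $x$. Here the main tool is the explicit description of the tubular (regular) components of $\mathcal{C}_Q$ in Euclidean type and the known denominator vectors of regular cluster variables: a regular cluster variable $x$ corresponds to a rigid indecomposable regular module $R$ lying in an exceptional tube, and for any tilting object $T$ (cluster $\mathbf{x}'$) the denominator of $\mathcal{L}(x,\mathbf{x}')$ is again given by the dimension vector $\underline{\dim}\,\mathrm{Hom}_{\mathcal{C}}$-data of $R$ relative to $T$, which vanishes at the "slot" of $x'$ precisely when $\mathrm{Ext}^1(R, M')=0=\mathrm{Ext}^1(M',R)$, i.e. exactly when $x$ and $x'$ are compatible. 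The point that makes the Euclidean, regular case tractable where the general case is open is that the regular modules live in tubes of bounded rank, so compatibility among them and with transjective objects is completely combinatorial (compatibility of "arcs" in the tube / of line bundles), and the denominator vectors have been computed explicitly; I would cite or re-derive these and then the equivalence is a finite check on each tube plus the interaction with the transjective part.

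The main obstacle I anticipate is the bookkeeping of \emph{denominator vectors} when the "other" variable is regular: unlike transjective cluster variables, for which the denominator vector of $\mathcal{L}(x',\mathbf{x})$ equals the dimension vector of the module and positivity/support arguments are clean, regular cluster variables can have denominator vectors that are \emph{not} equal to a module dimension vector for every choice of cluster $\mathbf{x}$ (there are known cases where the $g$-vector/denominator-vector coincidence fails), so establishing the precise statement "no $x$ in the denominator $\iff$ vanishing of the relevant $\mathrm{Ext}$" requires care and is exactly why part (a) is stated only for transjective $x$ and part (b) only for Euclidean $Q$. Concretely, the hard lemma will be: for a regular indecomposable rigid $R$ in a Euclidean cluster category and any cluster tilting object $T = T_1 \oplus \cdots \oplus T_n$, the $T_i$-coefficient of the denominator vector of the cluster character of $R$ is zero if and only if $\mathrm{Ext}^1_{\mathcal{C}}(R,T_i)\oplus\mathrm{Ext}^1_{\mathcal{C}}(T_i,R)=0$; proving this by reduction to the explicit tube combinatorics is where most of the work lies.
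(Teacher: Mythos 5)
Your overall strategy (cluster category, cluster character, compatibility as $\mathrm{Ext}^1$-vanishing, denominator vectors via $\mathrm{Hom}_{\mathcal C}(T,-)$) is the one the paper uses, and your part (a) is essentially the paper's argument: choose a transjective cluster through $x$ (a slice), apply the Buan--Marsh--Reiten denominator theorem, which for a transjective tilting object $T$ gives $\mathbf{den}(x')=\mathbf{dim}\,\mathrm{Hom}_{\mathcal C}(T,M')$ for \emph{every} cluster variable $x'$ --- so the case split on $x'$ transjective versus regular that you propose is unnecessary. Note also that the logic only requires producing \emph{one} witness cluster containing $x$ in which $x$ appears in the denominator of $\mathcal L(x',\mathbf x)$, since the hypothesis is quantified over all such clusters.

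In part (b), however, there is a genuine gap. You first assert that ``for any tilting object $T$ the denominator of $\mathcal L(x,\mathbf x')$ is again given by the dimension vector of $\mathrm{Hom}_{\mathcal C}$-data,'' and then correctly retract this in your last paragraph: the denominator theorem fails precisely when $T$ has a regular summand of quasilength $r-1$ in a tube of rank $r$, and this exceptional case is exactly where the theorem still has to be proved. You isolate the right ``hard lemma'' but give no proof of it beyond ``reduction to explicit tube combinatorics,'' and the lemma as you state it (an if-and-only-if for the $T_i$-coefficient of the denominator vector for \emph{every} cluster-tilting object) is stronger than what is needed. The paper instead reduces (via part (a) and the absence of morphisms between distinct tubes) to the case where both variables are regular in the same tube, and if the object $T_1$ corresponding to $x$ has quasilength $r-1$ it chooses a specific witness tilting object $T$ containing the whole sectional path from $T_1$ to the mouth. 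It then computes, using the standardness of the tube and the Auslander--Reiten formula, that $\dim\mathrm{Hom}_{\mathcal C}(T_1,M)=2$ while $\dim\mathrm{Hom}_B(\mathrm{rad}\,P_1,\mathrm{Hom}_{\mathcal C}(T,M))=1$ (one of the two basis morphisms from $U_1$ factors through $\mathrm{add}\,\tau T$), whence $\langle S_1,\mathrm{Hom}_{\mathcal C}(T,M)\rangle=1$; the $\mathbf e=0$ term of the cluster character then exhibits $x$ in the denominator, and the positivity theorem is invoked to rule out cancellation of that Laurent monomial. None of this computation, nor the appeal to positivity, appears in your proposal, so the hard case of (b) remains unproved.
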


Theorem \ref{theorem2} is applied to prove Theorem \ref{theorem1}. Observe that the statement of Theorem \ref{theorem2} is not symmetric in $x$ and $x'$. This led us to the following conjecture which is weaker than Conjecture \ref{conjecture3}.

\begin{conjecture}
\label{conjecture4}
Let $\mathcal{A}$ be a cluster algebra and $x, x'$ be two cluster variables. Assume that for any cluster $\mathbf x$ containing $x$, the Laurent expansion ${\mathcal L}(x', \mathbf{x})$ has no $x$ in its denominator. Then for any cluster $\mathbf{x}'$ containing $x'$, the Laurent expansion ${\mathcal L}(x, \mathbf{x}')$ has no $x'$ in its denominator. 
\end{conjecture}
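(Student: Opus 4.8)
We first reduce Conjecture~\ref{conjecture4} to Conjecture~\ref{conjecture3}, and then describe how one might attack the remaining cases directly. The reduction is immediate: if $\mathcal{A}$ satisfies Conjecture~\ref{conjecture3}, then the hypothesis of Conjecture~\ref{conjecture4} says precisely that $x$ and $x'$ are compatible, and compatibility is a symmetric relation, so the conclusion follows. Thus Conjecture~\ref{conjecture4} holds for all cluster algebras arising from surfaces by \cite[Theorem~8.6]{FST}, and, via Theorem~\ref{theorem2}, in all the situations occurring in Theorem~\ref{theorem1}; in particular it is available for the applications in the present paper.

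For a direct approach in general one should work at the level of denominator vectors. Writing $\mathbf{x}=(x_1,\dots,x_n)$ with $x=x_k$, the condition that $\mathcal{L}(x',\mathbf{x})$ has no $x$ in its denominator is equivalent to the $k$-th coordinate of the denominator vector $d_{\mathbf{x}}(x')$ being nonpositive. Two features should be exploited. First, the clusters containing a fixed cluster variable $x$ form a connected subcomplex of the cluster complex (the link of $x$), so any two of them are joined by mutations performed only in directions different from $k$, and the mutation rule for the coordinates of denominator vectors left unchanged by such a mutation is elementary. Second, one expects---and this is a theorem for cluster algebras from surfaces and of finite type---that the $k$-th coordinate of $d_{\mathbf{x}}(x')$ does not in fact depend on the choice of a cluster $\mathbf{x}\ni x$, so that it defines a compatibility degree $(x\,\|\,x')$ depending only on $x$ and $x'$. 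Granting this, Conjecture~\ref{conjecture4} reduces to showing that $(x\,\|\,x')\le 0$ if and only if $(x'\,\|\,x)\le 0$; in finite type this follows from Fomin--Zelevinsky's analysis of compatibility degrees of almost positive roots (these vanish simultaneously), and in general it might be approached by combining positivity of Laurent expansions with a sign analysis of the mutation recursions, or via a $2$-Calabi--Yau duality argument on a suitable cluster category, in which compatibility of $x$ and $x'$ is interpreted as the simultaneous vanishing of $\mathrm{Ext}^1(T,T')$ and $\mathrm{Ext}^1(T',T)$ for the associated rigid objects.

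The main obstacle is that essentially every ingredient of this direct approach is only conjectural outside the known cases: there is no closed formula for denominator vectors, the mutation rule for the coordinate in the mutated direction is not established in general, and even the independence of $d_{\mathbf{x}}(x')_k$ from the choice of $\mathbf{x}\ni x$---hence the very existence of a well-defined compatibility degree---is open beyond surfaces and finite type. Moreover the universal quantifier ``for every cluster containing $x$'' is genuinely global, and passing to a neighbouring cluster in the link of $x$ may simultaneously create and destroy appearances of $x$ in denominators, which blocks a naive induction on distance in the exchange graph. A realistic strategy is therefore to proceed type by type; a uniform proof seems to require either a combinatorial model for denominator vectors of the kind available for surfaces, or a categorical interpretation relating the denominator of $\mathcal{L}(x',\mathbf{x})$ directly to an $\mathrm{Ext}$-group between rigid objects---which is precisely the content of Conjecture~\ref{conjecture3} and is the crux of this whole circle of problems.
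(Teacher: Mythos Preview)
The statement you were asked to prove is Conjecture~\ref{conjecture4}, and the paper does \emph{not} prove it: it is stated as an open conjecture, motivated precisely by the asymmetry in Theorem~\ref{theorem2}. There is therefore no ``paper's own proof'' to compare against, and your writeup is likewise not a proof but a survey of possible approaches --- something you acknowledge explicitly in your final paragraph.

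Your reduction to Conjecture~\ref{conjecture3} is correct and is exactly the observation the paper makes when it calls Conjecture~\ref{conjecture4} ``weaker'' than Conjecture~\ref{conjecture3}: if the hypothesis of Conjecture~\ref{conjecture4} is equivalent to compatibility, then symmetry of compatibility plus Lemma~\ref{lem 2.1} gives the conclusion. This indeed settles the surface case and, via Corollary~\ref{cor 2.2}, the Dynkin and rank~$2$ cases. However, your claim that Theorem~\ref{theorem2} yields Conjecture~\ref{conjecture4} ``in all the situations occurring in Theorem~\ref{theorem1}'' is overstated for the Euclidean case. When $x$ is regular and $x'$ is transjective, neither part of Theorem~\ref{theorem2} lets you pass from the hypothesis of Conjecture~\ref{conjecture4} (which concerns clusters containing $x$ and the expansion of $x'$) to compatibility: part~(a) needs $x$ transjective, and part~(b)/Theorem~\ref{thm 3.3} needs $x'$ regular. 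This gap is exactly the asymmetry the paper highlights in the remark of Section~\ref{sect 2}.3 and is the reason Conjecture~\ref{conjecture4} is formulated at all; Theorem~\ref{theorem1} in the Euclidean case is proved in Lemma~\ref{lemma_5.1} by a separate argument using the field automorphism $f$, not by first establishing Conjecture~\ref{conjecture3} or~\ref{conjecture4}.

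The remainder of your discussion (compatibility degrees, independence of the $k$-th coordinate of the denominator vector, link connectedness, $2$-Calabi--Yau duality) is a reasonable research outline, but as you yourself say, each ingredient is conjectural outside the known cases. So the honest conclusion is that the statement remains open, the paper offers no argument, and your proposal does not supply one either.
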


The paper is organized as follows. In Section \ref{sect 1}, we recall some preliminary notions and facts that will be useful for the proof of our results. Section \ref{sect 2} is devoted to part $(a)$ of Theorem \ref{theorem2}, while part $(b)$ is proven in Section \ref{sect 3}. Finally, we discuss unistructurality and automorphisms, then we prove Theorem \ref{theorem1} in Section \ref{sect 5}.

\section{Preliminaries}\label{sect 1}
\subsection{Cluster algebras.} Let $Q$ be a finite connected quiver without loops or two-cycles. We denote by $n=|Q_0|$ the number of points of $Q$, the points are denoted by $1,2,\dots, n$. Let $\mathbf x = \{x_1, \dots, x_n\}$ be a set of $n$ variables, called a cluster, where we agree that the variable $x_i$ corresponds to the point $i$ of the quiver. The pair $(\mathbf x, Q)$ is called a {\it seed}. The field $\mathcal F = \mathbb Q(x_1,\dots, x_n)$  of rational functions in $x_1,\dots, n_n$ is called the {\it ambient field}.

Let $k$ be such that $1\leq k\leq n$. The seed mutation $\mu_k$ in direction $k$ transforms $(\mathbf{x}, Q)$ into $\mu_k(\mathbf{x}, Q) = (\mathbf{x}', Q')$ defined as follows: 

First, $Q'$ is obtained from $Q$ by applying the following operations
\begin{itemize}
			\item for any path $i\to k \to j$,  a new  arrow $i\to j$ is  inserted; 
			\item all arrows incident to $k$ are reversed;
			\item all ensuing two-cycles are deleted. 
			\end{itemize}		
			
Next, $\mathbf x' = (\mathbf x\setminus \{x_k\})\bigcup \{x_k'\}$ where $x_k'\in\mathcal F$ is defined by the so-called {\it exchange relation}
			\begin{equation*}
			x_k' = \frac{\underset{i\to k}{\prod} x_i + \underset{k\to i}{\prod}x_i}{x_k},
			\end{equation*}
			where products are taken over arrows entering and leaving $k$, respectively.

Iterating this procedure, we obtain a set $\{({\mathbf x}_\alpha, Q_\alpha)\}_\alpha$ of seeds, where the $\mathbf{x}_\alpha$ are the clusters and the $Q_\alpha$ are the exchange quivers. The cluster algebra $\mathcal A = \mathcal A(\mathbf x, Q)$, with initial seed $(\mathbf x, Q)$ is the $\mathbb{Z}$-subalgebra of $\mathcal F$ generated by the union $\mathcal X = \bigcup_\alpha \mathbf x_\alpha$ of all possible clusters obtained from $\mathbf x$ by successive mutations. Elements of $\mathcal X$ are called cluster variables. Two cluster variables are called \emph{compatible} if there exists a cluster containing both.

One of the most remarkable results of the theory is the Laurent phenomenon \cite{FZ1} which asserts that for any cluster algebra $\mathcal A$ and any seed $(\mathbf x, Q)$ of $\mathcal A$, each cluster variable $x$ of $\mathcal A$ is a Laurent polynomial over $\mathbb {Z}$ in the cluster variables from $\mathbf x = (x_1, \dots, x_n)$, that is $x$ can be written as 
\begin{equation*}
x = \mathcal L(x, \mathbf x) = \frac{p(\mathbf x)}{x_1^{d_1}, \dots, x_n^{d_n}}
\end{equation*}
as a reduced fraction, that is, $p$ is not divisible by any of the $x_i$. The vector $\mathbf d = (d_1, \dots, d_n)$ is called the {\it denominator vector} of $x$ and denoted by $\mathbf {den}(x)$.

A famous conjecture is the so-called {\it positivity conjecture}. It says that, for any cluster algebra $\mathcal A$, any cluster variable $x$ and any seed $(\mathbf x, Q)$ of $\mathcal A$, the numerator $p(\mathbf x)$ of $ \mathcal L(x, \mathbf x)$ has coefficients which are nonnegative integers.
The positivity conjecture has been proved in  several important cases, most notably for cluster algebras arising from surfaces \cite{MSW} and for acyclic cluster algebras \cite{KQ}, that is, cluster algebras arising from a quiver which can be transformed to an acyclic quiver using a sequence of mutations, and finally in \cite{LS4} for all cluster algebras arising from quivers.

\subsection{The cluster category.} Let $Q$ be an acyclic quiver, and $k$ be an algebraically closed field. We denote by $kQ$ the path algebra of $Q$, by ${\rm mod} \,kQ$ the category of finitely generated $kQ$-modules and by $\mathcal D = \mathcal D^b({\rm mod}\,kQ)$ the bounded derived category over ${\rm mod} \,kQ$. The cluster category $\mathcal C = \mathcal C_Q$ is the orbit category of $\mathcal D$ under the action of the automorphism $\tau^{-1}[1]$, where $\tau$ is the Auslander-Reiten translation and $[1]$ is the shift in $\mathcal D$, see \cite{BMRRT}. Then $\mathcal C$ is a triangulated 2-Calabi-Yau category having almost split triangles. The Auslander-Reiten quiver $\Gamma(\mathcal C)$ of $\mathcal C$ is the quotient of $\Gamma(\mathcal D)$ under the action of the quiver automorphism induced by the functor $\tau^{-1}[1]$. Note that the indecomposable objects of $\mathcal C$ may be identified with indecomposable $kQ$-modules or with indecomposable summands of $kQ[1]
  = \{ P_{i}[1]\,\vert\, i\in Q_0\}$, the shifts of the indecomposable projective $kQ$-modules. 
Then $\Gamma({\mathcal C})$ always has a unique component containing all objects of $kQ[1]$. This component is called {\it transjective}. If $Q$ is Dynkin, then $\Gamma({\mathcal C})$ reduces to the transjective component (and is finite). Otherwise the transjective component is of the form $\mathbb{Z}Q$ (see \cite{ASS}, VIII 1.1) and there are infinitely many other components called {\it regular}, which are either stable tubes (if $Q$ is Euclidean) or of type $\mathbb {ZA}_\infty$ (if $Q$ is wild). 

Let $n=|Q_0|$. A (basic) \emph{tilting object} $T$ in $\mathcal C_Q$ is an object of the form $T=\oplus_{i=1}^n T_i$ where the $T_i$ are indecomposable nonisomorphic objects of $\mathcal C$ such that ${\rm Hom}_{\mathcal C} (T_i, T_j[1])=0$ for all $i,j$. In particular, we have ${\rm Hom}_{\mathcal C} (T_i, T_i[1])=0$ for all $i$, that is, each $T_i$ is a {\it rigid} object of $\mathcal C$. With each tilting object $T$ is associated a map
\begin{equation*}
X_?^T:\mathcal C_0 \to \mathbb Z[x_1^{\pm 1}, \dots, x_n^{\pm 1}]
\end{equation*}
called the {\it cluster character}. This map induces a bijection between the indecomposable rigid objects of $\mathcal C$ and the cluster variables in the cluster algebra $\mathcal A(\mathbf x, Q)$. This bijection also induces a bijection between the tilting objects in $\mathcal C$ and the clusters of $\mathcal A(\mathbf x, Q)$, see \cite{CK2, Palu}

An algebra $\mathcal B$ is called {\it cluster-tilted of type $Q$} if there exists a tilting object $T$ in $\mathcal C_Q$ such that $\mathcal B = {\rm End}_{\mathcal C_Q}T$. Let $({\rm add}(\tau T))$ be the ideal of $\mathcal C = \mathcal C_Q$ consisting of those morphisms factoring through objects of ${\rm add}(\tau T)$. Then the functor ${\rm Hom}_\mathcal C (T, -)$ induces an equivalence $\mathcal C/({\rm add}(\tau T))\simeq {\rm mod}\,B$ (see \cite{BMR1}). For any $i\in Q_0$, we denote by $S_i$ the simple $B$-module corresponding to $i$. We denote by $\langle-, - \rangle $ the bilinear form on ${\rm mod} \,B$ defined by 
\begin{equation*}
\langle M,N \rangle   = {\rm dim\, Hom}_B(M,N) - {\rm dim\, Ext}_B^1(M,N)
\end{equation*}
for any $B$-modules $M$ and $N$. Further, we write 
\begin{equation*}
\langle M,N \rangle _a  =\langle M,N \rangle  - \langle N,M \rangle ,
\end{equation*}
 that is, $\langle -,- \rangle _a$ is the antisymmetrised form of $\langle -,- \rangle $.
Let $\mathbf e\in\mathbb{N}^{Q_0}$ and ${\rm Gr}_\mathbf e M$ denote the set of submodules of $M$ having dimension-vector equal to $\mathbf e$. This set is a projective variety, called the {\it Grassmannian} of submodules of $M$ of dimension vector $\mathbf{e}$. We denote by $\chi({\rm Gr}_\mathbf e M)$ its Euler-Poincar\'e characteristic (with respect to the singular cohomology if $k$ is the field of complex numbers, and to the \'etale cohomology if $k$ is arbitrary).  Then the cluster character $X_?^T$ is the unique map such that:
\begin{itemize}
\item [a)] $X^T_{T_i[1]}=x_i$ for any $i\in Q_0$;
\item [b)] if $M$ is indecomposable and not isomorphic to any $T_i[1]$ then $$X_M^T = \sum _{e\in\mathbb N^{Q_0}} \chi({\rm Gr}_e({\rm Hom_{\mathcal{C}}(T,M)})) \prod_{i\in Q_0} x_i^{\langle S_i, e \rangle _a - \langle S_i, {\rm Hom}_{\mathcal{C}}(T, M) \rangle };$$
\item [c)] for any two objects $M$, $N$ in $\mathcal C$
	\begin{equation*}
	\chi^T_{M\oplus N} = \chi^T_M\chi^T_N. 
	\end{equation*}
\end{itemize}

A cluster variable $x$ in $\mathcal A(\mathbf x, Q)$ is called {\it transjective} (or {\it regular}) if $x=X^T_M$ with $M$ an indecomposable rigid transjective (or regular, respectively) object of $\mathcal C$. 

It is shown in \cite{BMRdenominators} that if $T$ has only transjective direct summands and $x=X^T_M$ is any cluster variable, then 
\begin{equation*}
\mathbf{{\rm \mathbf {den}}}(x) = {{\rm \mathbf {dim}}}\,{\rm Hom}_{\mathcal C} (T,M). 
\end{equation*}
Moreover, if $Q$ is Euclidean, and no indecomposable regular direct summand of $T$ is of quasilength $r-1$ lying in a tube of rank $r$, then this equality holds for all cluster variables.

\section{The transjective case}\label{sect 2}
\subsection{The motivating example and Conjecture \ref{conjecture3}} In \cite{AD}, it was asked when indecomposable rigid modules over cluster-tilted algebras are uniquely determined by their composition factors. This is related to problem 7.6 of \cite{FZ4}. The main result of \cite{AD} says that if $B$ is a cluster-tilted algebra and $M,N$ are indecomposable transjective modules (in particular, are rigid) then $M\cong N$ if and only if $\underline{\textup{dim}}\,M=\underline{\textup{dim}}\,N$. This is not true if $M, N$ are not transjective. Indeed, let $B$ be the cluster-tilted algebra of type $\tilde{\mathbb{A}}_{1,2}$ given by the quiver
\[\xymatrix@C40pt{&2\ar[ld]_\alpha \\ 
1\ar@<2pt>[rr]^\beta\ar@<-2pt>[rr]_\delta&&3\ar[lu]_\gamma}\]
bound by $\alpha\beta=\beta\gamma=\gamma\alpha=0$. Then $\Gamma(\textup{mod}\,B)$ contains a tube of rank 2 containing the projective-injective module $P_2=I_2$ corresponding to the point 2 of the quiver:
\[ \xymatrix@R=0pt{
{\begin{array}{c}2\\1\\3\\2\end{array}} \ar[rd] \ar@{-}[rrrr]\ar@{--}[dd]&&&&
{\begin{array}{c}2\\1\\3\\2\end{array}}\ar@{--}[dd]
\\
&{\begin{array}{c}2\\1\\3\end{array}} \ar[rd] &&
{\begin{array}{c}1\\3\\2\end{array}}\ar[ru]\ar[rd]
\\
{\begin{array}{c}1\\3\end{array}}\ar[ru]
&&
{\begin{array}{l}2\ \ \ \\ 1\ 1\\\ \, 3\ 3\\\ \ \ \ 2\end{array}}\ar[ru]
&&
{\begin{array}{c}1\\3\end{array}}
}\]
where we identify along the dotted lines and ${\begin{array}{c}2\\1\\3\\2\end{array}} $ lies on the mouth.
Then ${\begin{array}{c}2\\1\\3\end{array}}$ and ${\begin{array}{c}1\\3\\2\end{array}}$ are non-isomorphic indecomposable rigid $B$-modules with the same dimension vector. Let $T_2$ be the object in the cluster category corresponding to the projective $B$-module $P_2$, then the corresponding cluster variable is 
\[ \frac{2x_1x_3+x_3^2+x_2+x_1^2}{x_1x_2x_3}.
\]
In particular, its denominator vector is $(1,1,1)$ while its dimension vector is $(1,2,1)$.
Note however that in this example the $i$-th component of the denominator vector is nonzero and also the $i$-th component of the dimension vector is nonzero.
This led us to consider conjecture 7.4(2) of \cite{FZ4} which we  have restated as Conjecture~\ref{conjecture3}, see  the introduction.

%
%

One implication of the conjecture is easy.

\begin{lemma}\label{lem 2.1}
 Let $\mathcal{A}$ be a cluster algebra and $x,x'$ be two cluster variables. If $x,x'$ are compatible then, for any cluster $\mathbf{x}$ containing $x$,  $\mathcal{L}(x',\mathbf{x})$ has no $x$ in the denominator.
\end{lemma}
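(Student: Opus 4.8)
The plan is to exploit the definition of compatibility directly. Suppose $x$ and $x'$ are compatible, so that there is a cluster, say $\mathbf{x}_0$, containing both $x$ and $x'$. Now let $\mathbf{x}$ be an arbitrary cluster containing $x$; I need to show that $\mathcal{L}(x',\mathbf{x})$ has no $x$ in its denominator. The idea is to pass from $\mathbf{x}_0$ to $\mathbf{x}$ by a sequence of mutations that never mutates in the direction of $x$, so that $x$ is a common variable of all intermediate clusters and in particular $x'$ stays a polynomial expression with respect to the variable $x$ along the way.

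More precisely, first I would observe that $x'$, viewed as an element of $\mathbf{x}_0$, is trivially a Laurent polynomial in $\mathbf{x}_0$ whose denominator does not involve $x$ (indeed it equals the single variable $x'$). Then I would invoke the fact that mutation in a direction $k\neq $ (the index of $x$) is an automorphism of $\mathcal{F}$ that fixes $x$ and sends the variable $x_k$ to $x_k' = (\text{monomial} + \text{monomial})/x_k$ where the two monomials in the numerator do not involve $x_k$; crucially, if one starts from a Laurent polynomial that has no $x$ in its denominator and substitutes such an exchange relation, the result still has no $x$ in its denominator, since one never divides by $x$ and $x$ only gets multiplied into numerators. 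Iterating this along a path of mutations avoiding the direction of $x$ shows that $\mathcal{L}(x',\mathbf{x})$ has no $x$ in the denominator for every cluster $\mathbf{x}$ in the connected subgraph of the exchange graph consisting of clusters reachable from $\mathbf{x}_0$ without mutating $x$.

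The remaining point — and the place where one has to be slightly careful — is that an arbitrary cluster $\mathbf{x}$ containing $x$ need not a priori be reachable from a fixed $\mathbf{x}_0$ without mutating $x$. However, this is precisely the statement that the clusters containing a fixed cluster variable $x$ form a connected subgraph of the exchange graph, which is a known structural fact (it follows, for instance, from the fact that the clusters containing $x$ correspond to the clusters of the ``quotient'' cluster algebra obtained by freezing $x$, or equivalently from connectivity results on exchange graphs). Granting this, any two clusters containing $x$ are joined by a mutation path none of whose steps is in the direction of $x$, and the argument of the previous paragraph applies.

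I expect the main obstacle to be making the substitution argument rigorous: one must check that ``reduced form'' is preserved, i.e. that after substituting the exchange relations and simplifying, no spurious cancellation reintroduces $x$ into the denominator — but since $x$ literally never appears in any denominator at any stage (we only ever divide by the mutated variables $x_k$ with $k$ not the index of $x$), writing the resulting rational function over a common denominator that is a monomial in $\mathbf{x}\setminus\{x\}$ is automatic, and reducing the fraction can only remove, never create, powers of $x$ in the denominator. The connectivity input is standard and can be cited. Everything else is a routine induction on the length of the mutation path.
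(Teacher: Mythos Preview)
Your proposal is correct and follows essentially the same route as the paper: both arguments use that compatibility yields a cluster containing $x$ and $x'$, connect it to an arbitrary cluster $\mathbf{x}$ containing $x$ by a mutation path avoiding the direction of $x$, and observe that the resulting exchange relations never divide by $x$. The paper's proof is a two-line version of this, implicitly invoking the connectivity of the set of clusters containing $x$, whereas you spell out both the connectivity input and the substitution/reduction step more carefully.
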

\begin{proof}
 Let $\mathbf{x}$ be a cluster containing $x$. Because $x$ and $x'$ are compatible, there exists a sequence of mutations (not involving $\mu_x$) such that $\mathbf{x}$ is mutated to a new cluster $\mathbf{x}'$ containing $x$ and $x'$ simultaneously. Expanding $x'$ in terms of the original cluster $\mathbf{x}$, we get  $\mathcal{L}(x',\mathbf{x})$ of the required form.
 \end{proof}

\subsection{Proof of  Theorem \ref{theorem2} (a)}
We start with an easy technical lemma.  Throughout, we assume that $\mathcal{A}=\mathcal{A}(\mathbf{x},Q)$ is an acyclic cluster algebra and denote by $\mathcal{C}=\mathcal{C}_Q$  the associated cluster category.

\begin{lemma}\label{lem 2.2}
 Let $x,x'$ be two cluster variables in $\mathcal{A}$ and $T$ be a tilting object in $\mathcal{C}$ such that $X^T_{T_i[1]}=x$. If $M'\in \mathcal{C}$ is such that $X_{M'}^T=x'$, then $x$ and $x' $ are compatible  if and only if $\Hom_{\mathcal{C}}(T_i,M')=0$.
\end{lemma}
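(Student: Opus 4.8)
The plan is to translate the statement entirely into the language of the cluster category $\mathcal{C}$, using the bijection between indecomposable rigid objects and cluster variables and the bijection between tilting objects and clusters, together with the fact that mutations on both sides correspond. First I would recall that $x' = X^T_{M'}$ forces $M'$ to be an indecomposable rigid object of $\mathcal{C}$ (since $x'$ is a cluster variable and the cluster character is a bijection on indecomposable rigid objects), and that $T = T_1 \oplus \cdots \oplus T_n$ with $x = X^T_{T_i[1]}$. On the cluster-side, $x$ and $x'$ are compatible if and only if there is a cluster containing both; on the category-side, this translates to: there is a tilting object $T'$ having both $T_i[1]$ and $M'$ as direct summands. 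Equivalently — and this is the key categorical reformulation — $T_i[1]$ and $M'$ together form part of a basic rigid object, i.e. $T_i[1] \oplus M'$ is rigid, which by definition of rigidity means $\mathrm{Hom}_{\mathcal{C}}(T_i[1], M'[1]) = 0$ and $\mathrm{Hom}_{\mathcal{C}}(M', T_i[1][1]) = 0$. Since $\mathcal{C}$ is $2$-Calabi--Yau, $\mathrm{Hom}_{\mathcal{C}}(M', T_i[2]) \cong D\,\mathrm{Hom}_{\mathcal{C}}(T_i, M')$, and the shift is an autoequivalence so $\mathrm{Hom}_{\mathcal{C}}(T_i[1], M'[1]) \cong \mathrm{Hom}_{\mathcal{C}}(T_i, M')$; both ``rigidity'' conditions thus collapse to the single condition $\mathrm{Hom}_{\mathcal{C}}(T_i, M') = 0$.

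The remaining point is the equivalence ``$T_i[1] \oplus M'$ rigid'' $\iff$ ``$T_i[1] \oplus M'$ extends to a (basic) tilting object''. The forward direction here is the substantive one: every basic rigid object in $\mathcal{C}_Q$ with fewer than $n$ indecomposable summands can be completed to a tilting object. This is a standard fact about cluster categories (it is precisely the statement that ``almost complete rigid objects can be completed'', going back to the combinatorics of cluster-tilting theory), and I would simply invoke it with a reference; the converse direction is trivial since any direct summand of a tilting object is rigid. One subtlety to address carefully: we must make sure $T_i[1] \not\cong M'$, i.e. $x \ne x'$, in the nontrivial case — but if $x = x'$ then they are trivially compatible and $\mathrm{Hom}_{\mathcal{C}}(T_i, T_i[1]) = 0$ by rigidity of $T$, so the equivalence holds in that degenerate case too, so no separate argument is really needed.

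I expect the main obstacle to be purely bookkeeping: making the correspondences precise. Specifically, one must be careful that the cluster character $X^T_?$ sends tilting objects to clusters and that the \emph{same} $T$ (the one with $X^T_{T_i[1]} = x$) is used throughout, so that ``$x'$ appears in a cluster containing $x$'' really does pull back to ``$M'$ appears as a summand of a tilting object containing $T_i[1]$'' — this uses the injectivity of $X^T_?$ on indecomposable rigid objects and on tilting objects. There is no hard computation here; the content is entirely in correctly citing the $2$-Calabi--Yau property and the completion-of-rigid-objects result, and in checking the degenerate case $x = x'$.
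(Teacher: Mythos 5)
Your proposal is correct and follows essentially the same route as the paper: translate compatibility of $x$ and $x'$ into compatibility of the objects $T_i[1]$ and $M'$ in $\mathcal{C}$, i.e.\ $\Hom_{\mathcal{C}}(T_i[1],M'[1])=0$, and then use the shift autoequivalence (and the $2$-Calabi--Yau symmetry) to reduce this to $\Hom_{\mathcal{C}}(T_i,M')=0$. The paper states the first equivalence in one line, whereas you justify it via the bijections of the cluster character and the completion of rigid objects to tilting objects; this is the same argument, merely with the standard background made explicit.
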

\begin{proof}
 Indeed, $x$ and $x'$ are compatible if and only if the corresponding objects $T_i[1]$ and $M'$ are compatible, that is, $\Hom_{\mathcal{C}}(T_i[1],M'[1])=0$. But this is equivalent to $\Hom_{\mathcal{C}}(T_i,M')=0$.
\end{proof}

We are now able to state and prove the following  theorem.

\begin{theorem}\label{thm 2.2}
 Let $\mathcal{A}$ be a cluster algebra, and $x,x'$ be two cluster variables with $x$ transjective. Then $x$ and $x'$ are compatible if and only if, for any cluster $\mathbf{x}$ containing $x$,  the Laurent expansion $\mathcal{L}(x',\mathbf{x})$ has no $x$ in the denominator.
\end{theorem}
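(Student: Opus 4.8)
Since the forward implication is precisely Lemma~\ref{lem 2.1}, only the converse has to be proved: assuming that for \emph{every} cluster $\mathbf{x}$ containing $x$ the Laurent expansion $\mathcal{L}(x',\mathbf{x})$ has no $x$ in the denominator, we must show that $x$ and $x'$ are compatible. The plan is to exhibit one especially convenient cluster containing $x$, namely the cluster of a tilting object all of whose direct summands are transjective, to apply the hypothesis to this cluster, and then to convert the resulting vanishing of a denominator-vector coordinate into the $\Hom$-vanishing criterion of Lemma~\ref{lem 2.2}; the bridge between the two is the denominator theorem of Buan--Marsh--Reiten recalled in Section~\ref{sect 1}, which is exactly why a transjective tilting object is needed.

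\emph{Step 1: a transjective tilting object whose cluster contains $x$.} Since $x$ is transjective, it corresponds, under the bijection between the cluster variables of $\mathcal{A}$ and the indecomposable rigid objects of $\mathcal{C}=\mathcal{C}_Q$ realized by every cluster character, to an indecomposable rigid object $N$ of the transjective component of $\mathcal{C}$. As $[1]\cong\tau$ in $\mathcal{C}$, the object $N[-1]=\tau^{-1}N$ is again indecomposable, rigid and transjective. Now every indecomposable rigid object of the transjective component is a direct summand of a tilting object of $\mathcal{C}$ all of whose summands are transjective: this is trivial when $Q$ is Dynkin, since then every object is transjective, and when $Q$ is not Dynkin it follows from the fact that the transjective component is of the form $\mathbb{Z}Q$, that every vertex of $\mathbb{Z}Q$ lies on a complete slice, and that complete slices of $\mathbb{Z}Q$ yield tilting objects of $\mathcal{C}$. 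Hence there is a tilting object $T=T_1\oplus\cdots\oplus T_n$ of $\mathcal{C}$, with all $T_j$ transjective, and an index $i$ with $T_i=N[-1]$, so that $T_i[1]=N$ and $X^T_{T_i[1]}=x$. The cluster $\mathbf{x}=\{x_1,\dots,x_n\}$ of $\mathcal{A}$ associated with $T$, where $x_j=X^T_{T_j[1]}$, then contains $x=x_i$.

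\emph{Step 2: applying the hypothesis.} Let $M'\in\mathcal{C}$ be the indecomposable rigid object with $X^T_{M'}=x'$. Since $T$ has only transjective direct summands, the denominator theorem gives
\begin{equation*}
\mathbf{den}(x')=\mathbf{dim}\,\Hom_{\mathcal{C}}(T,M'),
\end{equation*}
the denominator vector being computed with respect to the cluster $\mathbf{x}$. The $i$-th coordinate of the right-hand side is $\dim\Hom_{\mathcal{C}}(T_i,M')$, which is $\ge 0$. On the other hand, the hypothesis applied to $\mathbf{x}$, which contains $x=x_i$, says that $\mathcal{L}(x',\mathbf{x})$ has no $x_i$ in its denominator, i.e.\ that the $i$-th coordinate of $\mathbf{den}(x')$ is $\le 0$. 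Hence this coordinate is $0$, so $\Hom_{\mathcal{C}}(T_i,M')=0$, and by Lemma~\ref{lem 2.2} applied to $T$, $T_i$ and $M'$ this means exactly that $x$ and $x'$ are compatible. This proves the converse and completes the proof.

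\emph{Expected main difficulty.} The only point that uses more than formal bookkeeping is Step~1, namely that every transjective cluster variable can be placed inside a cluster whose variables are all transjective --- equivalently, that every indecomposable rigid object of the transjective component is a summand of a tilting object of $\mathcal{C}$ with only transjective summands. This is what makes the denominator theorem, which is available only for transjective tilting objects, applicable here; the case of an \emph{arbitrary} cluster containing $x$, whose tilting object may have regular summands, is deliberately avoided, since there the denominator theorem is not known in general (only, partially, in the Euclidean case).
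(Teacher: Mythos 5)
Your proposal is correct and follows essentially the same route as the paper: both proofs reduce to the single transjective cluster obtained from a (complete) slice through the object corresponding to $x$, invoke the Buan--Marsh--Reiten denominator theorem for transjective tilting objects, and translate the vanishing of the relevant denominator coordinate into the $\Hom$-vanishing criterion of Lemma~\ref{lem 2.2}; the only cosmetic difference is that the paper phrases the argument as a proof by contradiction rather than directly.
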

 
\begin{proof}
 Because of Lemma \ref{lem 2.1}, we only need to prove the sufficiency. Assume to the contrary that $x,x'$ are not compatible and let $\mathbf{x}$ be a transjective cluster containing $x$. Such a cluster always exists, take for instance a slice in the cluster category containing the object corresponding to $x$, see \cite{ABS2}. Let $T\in \mathcal{C}$ be the tilting object corresponding to $\mathbf{x}$. Then $T$ is transjective and there exists an indecomposable summand
 $T_i$ of $T$ such that $X^T_{T_i[1]}=x$. Let also $M'$ be the object in $\mathcal{C}$ such that $X_{M'}^T=x'$. Then, because of
\cite{BMRdenominators} (see section \ref{sect 1} above), we have ${\textbf{den}}(x')={\textbf{dim}}\,\Hom_{\mathcal{C}}(T,M')$. 
On the other hand, because of  
  the previous lemma, the incompatibility of $x$ and $x'$ implies that $\Hom_{\mathcal{C}}(T_i,M')\ne 0$. This shows that 
 \[\mathcal{L}(x',\mathbf{x}) = \frac{P(\mathbf{x})}{x^d \,m(\mathbf{x}\setminus\{x\})},\]
 with $d>0$. The proof is now complete.
\end{proof}

The main application of this theorem is to cluster automorphisms (see section \ref{sect 5} below). For the time being, we obtain an obvious corollary asserting the truth of the previous conjecture in two particular cases.

\begin{corollary}\label{cor 2.2}
Let $\mathcal{A}$ be a cluster algebra of Dynkin type, or of rank 2, then two cluster variables 
  $x$ and $x'$ are compatible if and only if, for any cluster $\mathbf{x}$ containing $x$,  the Laurent expansion $\mathcal{L}(x',\mathbf{x})$ has no $x$ in the denominator.
\end{corollary}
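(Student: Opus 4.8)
The plan is to deduce Corollary~\ref{cor 2.2} directly from Theorem~\ref{thm 2.2} by observing that in the two cases named, \emph{every} cluster variable is transjective, so the hypothesis ``$x$ transjective'' in Theorem~\ref{thm 2.2} is automatically satisfied and no extra work is needed.

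First I would dispose of the Dynkin case. If $\mathcal A$ is of Dynkin type, then $\mathcal A$ is acyclic (its quiver is mutation-equivalent to an orientation of a Dynkin diagram), so the associated cluster category $\mathcal C_Q$ is defined, and as recalled in Section~\ref{sect 1} the Auslander--Reiten quiver $\Gamma(\mathcal C)$ reduces to the transjective component. Hence every indecomposable object of $\mathcal C$ is transjective, in particular every indecomposable rigid object is, and via the cluster character bijection between indecomposable rigid objects and cluster variables, every cluster variable of $\mathcal A$ is transjective. Applying Theorem~\ref{thm 2.2} to an arbitrary pair $x,x'$ (with $x$ transjective, as just shown) gives exactly the asserted equivalence.

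Next the rank~$2$ case. A rank~$2$ cluster algebra has a quiver on two vertices with no loops or $2$-cycles, which is automatically acyclic (any quiver $1 \to 2$ or $2 \to 1$ is acyclic), so again $\mathcal C_Q$ is defined. Here I would argue that every cluster variable is still transjective: either invoke that in rank $2$ the denominator vectors / the combinatorics of mutation put every cluster variable on the transjective component (for type $A_1\times A_1$, $A_2$ the algebra is Dynkin and the previous paragraph applies; for the Kronecker type $\tilde{\mathbb A}_1$ every \emph{rigid} indecomposable lies in the transjective component $\mathbb ZQ$, since the regular components are homogeneous tubes containing no rigid objects; for wild rank~$2$ quivers the regular components are of type $\mathbb Z\mathbb A_\infty$ and again contain no rigid objects, as a rigid object in a tube or $\mathbb Z\mathbb A_\infty$ component would have to be quasi-simple in a rank-$1$ situation, which fails). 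Thus in all rank~$2$ cases every cluster variable is transjective and Theorem~\ref{thm 2.2} again applies verbatim.

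The only real point requiring care — and the step I expect to be the main obstacle — is the rank~$2$ verification that no cluster variable is regular, i.e.\ that regular components of $\mathcal C_Q$ for a rank~$2$ quiver never contain a rigid indecomposable object; this is where one must separate the finite (Dynkin), tame (Kronecker), and wild subcases and recall that homogeneous tubes and $\mathbb Z\mathbb A_\infty$-components have no rigid objects. Once that is in hand, the corollary is immediate: in both settings the hypothesis of Theorem~\ref{thm 2.2} holds for every choice of $x$, so the conclusion of Theorem~\ref{thm 2.2} holds for every pair $(x,x')$, which is precisely the statement of the corollary.
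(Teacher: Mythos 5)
Your proposal is correct and is essentially the paper's own proof: the paper disposes of the corollary in one line by noting that in these cases all cluster variables are transjective and then applying Theorem~\ref{thm 2.2}. Your extra case analysis in rank $2$ (ruling out rigid indecomposables in the homogeneous tubes of the Kronecker case and in the $\mathbb{Z}\mathbb{A}_\infty$ components of the wild case) just fills in details the paper leaves implicit, and it is sound.
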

\begin{proof}
 Indeed, in these cases, all cluster variables are transjective.
\end{proof}

\subsection{Remark}
 The statement of the theorem above is clearly not symmetric:  it says that if one of the variables is transjective, then compatibility means that the Laurent expansion of the other does not have the first in the denominator.

\section{The tame case}\label{sect 3}
\subsection{Preliminaries on the cluster category}\label{sect 3.1}
As in section \ref{sect 2}, we let $\mathcal{A}$ be a cluster algebra of type $Q$, and we denote by $\mathcal{C}$ the associated cluster category. Throughout this section, we assume that $Q$ is a Euclidean quiver.

Assume that $T_1$ is an indecomposable rigid object of quasilength $r-1$ lying in a tube $\mathcal{T}$ of $\Gamma(\mathcal{C})$ of rank $r$. We denote by 
$\xymatrix{T_1\ar[r]&T_2\ar[r]&\cdots\ar[r]&T_{r-1}}$
the sectional path from $T_1$ to the mouth of $\mathcal{T}$ and assume that $T$ is a tilting object having all the $T_i$ (with $1\le i\le r-1$) as summands. Such an object exists, because the $T_i$ are clearly compatible in $\mathcal{C}$.

Consider the set $\Delta$ of all indecomposables in $\mathcal{T}$ lying on a path of irreducible morphisms from $T_1$ to $\tau^2 T_1$ of length equal to $2r-4$. Observe that, for any $M\in \Delta$, we have $\Hom_{\mathcal{C}}(M,\tau^2 T_1)\ne 0$. Actually, if $M$ is a rigid object in $\mathcal{T}$ such that $\Hom_{\mathcal{C}}(T_1,M)\ne 0$, then $M$ belongs to $\Delta$ (and even to the ``upper half'' of $\Delta$).
Note that in \cite{BM}, the authors define the notion of a wing $\mathcal{W}_{\tau T_1}$ attached to $T_1$. This is related to $\Delta$ as follows: if $X$ is a rigid object in $\mathcal{T}$, then $X\in \Delta$ if and only if $X\notin  \mathcal{W}_{\tau T_1}$.

\begin{lemma}
 \label{lem 3.1}
 $\Delta$ contains no direct summand of $\tau T$.
\end{lemma}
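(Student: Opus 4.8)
The plan is to argue by contradiction. Suppose some indecomposable summand $T_j$ of $\tau T$ lies in $\Delta$; equivalently, $\tau^{-1}T_j$ is a summand of $T$ lying in the tube $\mathcal T$, and $\tau^{-1}T_j \in \tau^{-1}\Delta$. The first step is to translate the membership $\tau^{-1}T_j\in\tau^{-1}\Delta$, together with rigidity of $\tau^{-1}T_j$, into a statement that $\operatorname{Hom}_{\mathcal C}(T_1,\tau^{-1}T_j)\ne 0$ or, using the $2$-Calabi-Yau property, into a nonvanishing $\operatorname{Ext}^1$ between $\tau^{-1}T_j$ and one of the $T_i$'s. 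Concretely, the remark preceding the lemma tells us that a rigid object $M$ of $\mathcal T$ lies in $\Delta$ exactly when $M\notin\mathcal W_{\tau T_1}$, i.e.\ $M$ is not supported inside the wing of $T_1$. So $T_j\in\Delta$ means $T_j$ is a rigid object of $\mathcal T$ outside the wing $\mathcal W_{\tau T_1}$.

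Second, I would invoke the compatibility of summands of a tilting object: since $T$ is a tilting object containing $T_1,\dots,T_{r-1}$ and $T_j$ (note $T_j=\tau(\tau^{-1}T_j)$ and $\tau^{-1}T_j$ is a summand of $T$, so $T_j$ is a summand of $\tau T$), we must have $\operatorname{Hom}_{\mathcal C}(T_i,\tau^{-1}T_j[1])=0$ and $\operatorname{Hom}_{\mathcal C}(\tau^{-1}T_j,T_i[1])=0$ for all $i$. In a tube, by the $2$-Calabi-Yau property and the explicit combinatorics of rigid objects inside a tube, two rigid objects are compatible if and only if their "supports'' (the intervals of quasi-simples they cover, positioned by quasi-length) are nested or non-crossing in a precise sense. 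The heart of the argument is then purely combinatorial inside the tube $\mathcal T$: the $T_i$'s form the sectional path from $T_1$ up to the mouth, so together they "fill out'' the wing $\mathcal W_{\tau T_1}$ up to the mouth; any rigid object compatible with all of $T_1,\dots,T_{r-1}$ and sitting in $\mathcal T$ must either lie inside $\mathcal W_{\tau T_1}$ or be "disjoint'' from it — but disjointness inside a tube of rank $r$ forces the object to lie in a position that, I claim, is incompatible with $T_1$ after all, or else it is forced into $\mathcal W_{\tau T_1}$, contradicting $T_j\in\Delta$.

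So the key steps, in order, are: (1) reformulate "$T_j\in\Delta$, $T_j$ summand of $\tau T$'' as "$\tau^{-1}T_j$ is a rigid summand of $T$, lying in $\mathcal T$, outside $\mathcal W_{T_1}$" (using the remark and a $\tau$-shift); (2) write down the compatibility constraints $\operatorname{Ext}^1(\tau^{-1}T_j,T_i)=0=\operatorname{Ext}^1(T_i,\tau^{-1}T_j)$ for $i=1,\dots,r-1$, using that $T$ is tilting; (3) run the interval/wing combinatorics in the tube to show these constraints force $\tau^{-1}T_j\in\mathcal W_{T_1}$ (equivalently $T_j\in\mathcal W_{\tau T_1}$), contradicting $T_j\in\Delta$. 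I expect step (3) to be the main obstacle: it requires a clean description of which rigid objects of a rank-$r$ tube are compatible with the full sectional ray $T_1\to\cdots\to T_{r-1}$, and the bookkeeping between $\Delta$, the wing $\mathcal W_{\tau T_1}$, and the $\tau$-shift is exactly the place where an off-by-one error in quasi-length would sink the proof. A convenient way to organize step (3) is to use the dichotomy from \cite{BM} cited just above: a rigid $X\in\mathcal T$ is in $\Delta$ iff $X\notin\mathcal W_{\tau T_1}$, so it suffices to show that no rigid summand of $\tau T$ can avoid $\mathcal W_{\tau T_1}$ while remaining in $\mathcal T$ — and this should follow because such a summand would have to be compatible with the object $T_1$ (which is a summand of $T$, hence $\tau T_1$ a summand of $\tau T$) yet the only rigid objects of $\mathcal T$ both compatible with $T_1$ and outside the wing are precisely those that would create a non-nested crossing, contradicting $\operatorname{Ext}^1$-vanishing.
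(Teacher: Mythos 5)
Your proposal never actually proves the lemma: the entire burden is pushed onto step (3), the ``interval/wing combinatorics'' classifying which rigid objects of a rank-$r$ tube are compatible with the whole sectional ray $T_1\to\cdots\to T_{r-1}$, and there you only write that the conclusion ``should follow'' because a summand outside the wing ``would create a non-nested crossing.'' That classification is exactly the content you would need to supply, and you don't supply it; as written, the argument is circular at the decisive point (you assert that the only rigid objects compatible with $T_1$ and outside the wing are incompatible with $T_1$). There is also a misstep in step (1): you translate ``$T_j\in\Delta$ is a summand of $\tau T$'' into nonvanishing of $\Hom_{\mathcal C}(T_1,\tau^{-1}T_j)$, but the relevant Hom space is $\Hom_{\mathcal C}(T_1,T_j)$ itself, i.e.\ $\Hom_{\mathcal C}(T_1,\tau X)$ for the summand $X=\tau^{-1}T_j$ of $T$ --- and that off-by-one $\tau$-shift is precisely what prevents you from closing the argument on the spot.

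The point you are circling around but never land on is that no tube combinatorics is needed. By construction every $M\in\Delta$ lies on a path of irreducible morphisms starting at $T_1$ inside the standard tube $\mathcal T$, so the composition along that path is nonzero and $\Hom_{\mathcal C}(T_1,M)\ne 0$ (this is the same fact that gives Lemma \ref{lem 3.1.2}(a)). If $M=\tau T_i$ were a summand of $\tau T$ lying in $\Delta$, then
$0\ne\Hom_{\mathcal C}(T_1,\tau T_i)=\Hom_{\mathcal C}(T_1,T_i[1])=\textup{Ext}^1_{\mathcal C}(T_1,T_i)$,
using that $\tau\cong[1]$ on the cluster category; this contradicts the rigidity of the tilting object $T$, of which both $T_1$ and $T_i$ are summands. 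That is the whole proof. Your step (2) already records the needed $\textup{Ext}^1$-vanishing, so the fix is to apply it directly to the pair $(T_1,\tau^{-1}T_j)$ via the morphism $T_1\to T_j$ guaranteed by $T_j\in\Delta$, and to delete step (3) entirely.
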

\begin{proof}
 Indeed, if $\tau T_i\in \Delta$ for some $i$ such that $1\le i <r$, then there is a morphism from $T_1 $ to $\tau T_i$ and thus $\textup{Ext}^1_{\mathcal{C}}(T_1,T_i)\ne 0$, a contradiction.
\end{proof}

Let $U_1$ denote the unique direct predecessor of $T_1$ of quasilength $r$. Then we have the following picture.
\[\xymatrix@!@R0pt@C0pt{~ \ar@{-}[rrrrr]  \ar@{--}[ddd]&&&&&T_{r-1}\ar[rd] \ar@{-}[rrrrr] &&&&&~  \ar@{--}[ddd]\\
&&&&\cdot\ar[ru]&&\cdot\ar@{.}[rd] \\
&&&\cdot\ar@{.}[ru]&&&&\cdot\ar[rd] \\
\tau T_1\ar[rd] \ar@{--}[ddd]&&T_1\ar[ru]\ar[rd] &&&\Delta&&&\tau^2T_1\ar[rd]&&\tau T_1 \ar@{--}[ddd]\\
&U_1\ar[ru]&&\cdot\ar@{.}[rd] &&&&\cdot\ar[ru] &&\tau U_1\ar[ru]\\
&&&&\cdot\ar[rd] &&\cdot\ar@{.}[ru]\\
&&&&&\cdot\ar[ru]&&&&&
}\]
where we identify along the vertical dashed lines and $T_{r-1}$ lies on the mouth of $\mathcal{T}$.

\begin{lemma}
 \label{lem 3.1.2}
 With the notation above, we have
 \begin{itemize}
\item [\textup{(a)}] $\textup {dim}\,\Hom_{\mathcal{C}}(T_1,M)=2$ for every $M\in \Delta$.
\item [\textup{(b)}]  $\textup {dim}\,\Hom_{\mathcal{C}}(U_1,M)=2$ if $M\in \Delta$ does not lie on the sectional path from the mouth to $\tau U_1$.
\item [\textup{(c)}]  $\textup {dim}\,\Hom_{\mathcal{C}}(U_1,M)=1$ if $M\in \Delta$  lies on the sectional path from the mouth to $\tau U_1$.
\end{itemize}
\end{lemma}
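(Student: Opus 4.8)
The plan is to work entirely inside the tube $\mathcal{T}$, using the explicit combinatorics of the stable tube of rank $r$ together with the dimension formulas for $\operatorname{Hom}$-spaces between indecomposables in a tube. Recall that in a stable tube of rank $r$, an indecomposable $X$ of quasilength $\ell$ sitting at a given position has $\operatorname{Hom}_{\mathcal C}(X,Y)$ of dimension equal to the number of ``overlaps'' of the ray through $X$ and the coray through $Y$ (a well-known count, going back to Ringel, which for objects of small quasilength is $0$, $1$ or $2$); since $\mathcal C$ is $2$-Calabi--Yau, $\operatorname{Hom}_{\mathcal C}(X,Y)$ also picks up a contribution from $\operatorname{Ext}^1_{\mathcal C}(Y,X)\cong D\operatorname{Hom}_{\mathcal C}(X,Y[1])$, but for rigid $T_1$ this extra term vanishes when the second argument is $T_1$ itself, and in general I will keep careful track of it.

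First I would fix coordinates on $\Delta$: by definition $\Delta$ consists of the indecomposables on a path of irreducible morphisms of length $2r-4$ from $T_1$ to $\tau^2 T_1$, so $\Delta$ is a ``diamond'' whose lowest vertices are $T_1, \ldots$ up to $\tau^2 T_1$ (quasilength $r-1$, this is the row just below the mouth) and whose top vertex $T_{r-1}$ sits on the mouth; the picture in the excerpt makes this precise. For part (a), note that $T_1$ has quasilength $r-1$, hence sits one step below the mouth. An object $M\in\Delta$ lies ``above and to the right'' of $T_1$ in the sense that there is a path $T_1\to\cdots\to M$. A direct ray/coray count shows that the coray ending at any such $M$ meets the ray starting at $T_1$ in exactly two consecutive indecomposables of the tube, so $\dim\operatorname{Hom}_{\mathcal C}(T_1,M)=2$; the $\operatorname{Ext}^1_{\mathcal C}(M,T_1)$-correction is zero because $M$ and $T_1$ both lie in the wing of quasilength $r-1$ and one checks $\operatorname{Hom}_{\mathcal C}(T_1,M[1])=D\operatorname{Ext}^1_{\mathcal C}(T_1,M)$, which vanishes for these positions since the relevant coray/ray shift does not overlap. (Here I would simply invoke the standard picture: the Hom from the almost-mouth object $T_1$ to anything in $\Delta$ is two-dimensional, and rigidity of $T_1$ together with $2$-CY kills the correction term.)

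For parts (b) and (c), $U_1$ is the unique direct predecessor of $T_1$ of quasilength $r$, so $U_1$ sits directly below $T_1$, at quasilength $r$, which in a rank-$r$ tube is the ``full'' quasilength whose objects have a one-dimensional self-extension space behaving periodically; thus $U_1$ is \emph{not} rigid and the $2$-CY correction $\operatorname{Ext}^1_{\mathcal C}(M,U_1)\cong D\operatorname{Hom}_{\mathcal C}(U_1,M[1])$ will genuinely contribute. The strategy is: compute $\dim\operatorname{Hom}_{\mathcal D^b}(U_1,M)$ in the tube of the derived category (equivalently, the ``naive'' tube-Hom between $U_1$ of quasilength $r$ and $M\in\Delta$) — this will be $1$ for all $M\in\Delta$ because a quasilength-$r$ object's ray meets the coray of any $M$ of quasilength $<r$ in exactly one spot — and then add the correction $\dim\operatorname{Hom}_{\mathcal C}(U_1,M[1])$, which is nonzero (equal to $1$) precisely when $M$ sits on the sectional path from the mouth to $\tau U_1$, and zero otherwise. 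Concretely: $M[1]=\tau^{-1}M$ inside the tube (the $\tau^{-1}[1]$-orbit identification), so $\operatorname{Hom}_{\mathcal C}(U_1,M[1])$ is the tube-Hom from $U_1$ to $\tau^{-1}M$, and this is nonzero exactly when $\tau^{-1}M$ lies on the ray out of $U_1$, i.e. when $M$ lies on the coray into $\tau U_1$, i.e. — after matching this up with the geometry of $\Delta$ — when $M$ is on the sectional path from the mouth to $\tau U_1$. Adding: total $=1$ in that case is wrong, so I need $1+0$ vs $1+1$; rechecking signs, the sectional-path objects give the \emph{smaller} value, so in fact the base tube-Hom must be arranged so the correction \emph{lowers} to $1$ off the path and the full value is $2$ generically — I would reconcile this by carefully orienting which of $\operatorname{Hom}_{\mathcal C}(U_1,M)$ contributions is ``$1+1=2$'' (generic $M$) versus ``$1+0=1$'' ($M$ on the special sectional path).

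The main obstacle I anticipate is exactly this bookkeeping of the $2$-Calabi--Yau correction term for the non-rigid object $U_1$: one must be scrupulous about the identification $M[1]\simeq\tau^{-1}M$ inside the tube, about which objects of $\Delta$ lie on the coray into $\tau U_1$, and about the fact that $U_1$ itself has quasilength equal to the tube rank (so its ray and coray ``wrap around''). Once the correct normalization is pinned down, (b) and (c) follow by reading off the unique overlap in each of the two cases; part (a) is comparatively routine since $T_1$ is rigid and the correction vanishes. Throughout I will use the picture in the excerpt to locate $T_1$, $U_1$, $\tau U_1$ and the mouth-to-$\tau U_1$ sectional path, and I will invoke the standard Hom-dimension count in stable tubes (e.g. via Ringel's description, or via the Auslander--Reiten formula in $\mathcal C$) rather than rederiving it.
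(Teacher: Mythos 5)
Your overall strategy --- split $\Hom_{\mathcal C}$ into a module-category part plus an orbit-category correction and evaluate both by the combinatorics of the standard tube together with the Auslander--Reiten formula --- is exactly the paper's strategy, but the execution contains concrete errors and an admitted unresolved step, so the proof does not go through as written. In part (a), the claim that the tube-Hom $\Hom_{kQ}(T_1,M)$ is $2$-dimensional is impossible: $T_1$ has quasilength $r-1<r$, so its quotients in the tube have pairwise distinct quasi-socles and every Hom space $\Hom_{kQ}(T_1,-)$ inside $\mathcal T$ is at most one-dimensional; the correct count is $\dim\Hom_{kQ}(T_1,M)=1$. Correspondingly, your claim that the correction term vanishes is also false: the second summand in $\Hom_{\mathcal C}(T_1,M)=\Hom_{kQ}(T_1,M)\oplus\textup{Ext}^1_{kQ}(\tau T_1,M)$ equals $D\Hom_{kQ}(M,\tau^2T_1)$ by the AR formula, and this is one-dimensional for every $M\in\Delta$ --- that is essentially the defining property of $\Delta$. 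You arrive at the right total $2$ only because two errors cancel; moreover, rigidity of $T_1$ in $\mathcal C$ (i.e.\ $\textup{Ext}^1_{\mathcal C}(T_1,T_1)=0$) says nothing about $\textup{Ext}^1_{kQ}(\tau T_1,M)$ for $M\ne T_1$. A related structural confusion runs through the whole argument: the correction summand is $\Hom_{\mathcal D}(X,FY)$ with $F=\tau^{-1}[1]$, i.e.\ $\textup{Ext}^1_{kQ}(\tau X,Y)\cong D\Hom_{kQ}(Y,\tau^2X)$; it is not $\textup{Ext}^1_{\mathcal C}(Y,X)$, nor $\Hom_{\mathcal C}(X,Y[1])$, and in $\mathcal C$ one has $M[1]\cong\tau M$, not $\tau^{-1}M$.

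For parts (b) and (c) the gap is not merely notational: you compute a tentative answer, observe that it contradicts the statement (``total $=1$ in that case is wrong''), and defer the resolution to an unspecified ``reconciliation of signs.'' But deciding which $M\in\Delta$ give dimension $1$ and which give $2$ is precisely the content of the lemma, so nothing has been proved. The correct bookkeeping is: $\dim\Hom_{kQ}(U_1,M)=1$ for all $M\in\Delta$ (again every Hom space out of the brick $U_1$ in the tube is at most one-dimensional), and the correction $\textup{Ext}^1_{kQ}(\tau U_1,M)\cong D\Hom_{kQ}(M,\tau^2U_1)$ has dimension $1$ when $M$ does not lie on the sectional path from the mouth to $\tau U_1$ and $0$ when it does; here one uses that $U_1$ has quasilength exactly $r$. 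So the generic value is $1+1=2$ and the exceptional sectional-path value is $1+0=1$. To repair your write-up you should abandon the ``two-point overlap'' count for the module-category Hom, identify the correction term correctly as $D\Hom_{kQ}(-,\tau^2 X)$, and carry out the explicit case analysis of $\Hom_{kQ}(M,\tau^2 U_1)$ over $M\in\Delta$.
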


\begin{proof}
 Recall that, for all $X,Y\in \mathcal{T}$,
 \[\begin{array}{rcl}\Hom_{\mathcal{C}}(X,Y) &=& 
 \Hom_{\mathcal{D}}(X,Y) \oplus  \Hom_{\mathcal{D}}(\tau X [-1], Y) \\
 &=& 
 \Hom_{kQ}(X,Y) \oplus  \textup{Ext}^1_{kQ}(\tau X , Y) .
 \end{array}\]
 Now, because $\mathcal{T}$ is standard in $\textup{mod}\,kQ$, for $M\in \Delta$, we have
 \[ \textup{dim}\,\Hom_{kQ}(T_1,M)=1,\]
 while $ \textup{dim}\,\textup{Ext}^1_{kQ}(\tau T_1 ,M) = \textup{dim}\,\Hom_{kQ}(M,\tau^2 T_1)=1$, where we have applied the Auslander-Reiten formula.
 Similarly, for $M\in \Delta$, we have
 \[ \textup{dim}\,\Hom_{kQ}(U_1,M)=1,\]
 while $ \textup{dim}\,\textup{Ext}^1_{kQ}(\tau U_1 ,M) = \textup{dim}\,\Hom_{kQ}(M,\tau^2 U_1)$ is equal to 1 or 0 according to the cases considered in (b) or (c) respectively. Here, we use essentially that $T_1$ and $U_1$ are of quasi-length $r-1$ and $r$, respectively. 
\end{proof}

\subsection {Passing to cluster-tilted algebras}
Let now $B=\textup{End}_{\mathcal{C}}\,T$ be the cluster-tilted algebra corresponding to the tilting object $T$ in $\mathcal{C}$ considered in subsection \ref{sect 3.1}. Then $P_1=\Hom_{\mathcal{C}}(T,T_1) $ is projective, and there is a sectional path of projective modules of length $r-1$ from $P_1$ to the mouth of the tube. On the other hand, $I_1=\Hom_{\mathcal{C}}(T,\tau^2 T_1)$ is the injective module corresponding to $P_1$ (see Lemma 5 of \cite{ABS4}).

Let $\Omega=\{\Hom_{\mathcal{C}}(T,M) \mid M\in \Delta\}$. Observe that, because of Lemma \ref{lem 3.1}, for every $M\in \Delta$, the $B$-module $\Hom_{\mathcal{C}}(T,M)$ is indecomposable. Let $R_1=\Hom_{\mathcal{C}}(T,U_1)$. Then $R_1$ is equal to the radical of the indecomposable projective $B$-module $P_1$, because the unique direct successor $P_2=\Hom_{\mathcal{C}}(T,T_2)$ of $P_1$ of smaller quasi-length  is projective.

\begin{lemma}
 \label{lem 3.2}
 With the above notation, for every $N\in \Omega$, we have
 \begin{itemize}
\item [\textup{(a)}] 
 $ \textup{dim}\,\Hom_{B}(P_1,N)=2.$
\item [\textup{(b)}] 
 $ \textup{dim}\,\Hom_{B}(R_1,N)=1.$
\end{itemize}
\end{lemma}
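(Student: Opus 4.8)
The plan is to transport the Hom-space computations of Lemma~\ref{lem 3.1.2} from the cluster category $\mathcal{C}$ to the cluster-tilted algebra $B$ via the functor $G=\Hom_{\mathcal{C}}(T,-)$, using the equivalence $\mathcal{C}/(\textup{add}(\tau T))\simeq \textup{mod}\,B$ recalled in Section~\ref{sect 1}. For part (a), I would observe that $P_1=G(T_1)$ and $N=G(M)$ for some $M\in\Delta$, so that the dimension of $\Hom_B(P_1,N)$ is governed by the known behaviour of $G$ on Hom-spaces: since $P_1$ is projective, $\Hom_B(P_1,G(M))$ can be identified with the appropriate dimension-vector component of $G(M)$, and more directly one compares $\Hom_B(G(T_1),G(M))$ with $\Hom_{\mathcal{C}}(T_1,M)$. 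The functor $G$ is full and its kernel on morphisms is exactly the ideal $(\textup{add}(\tau T))$; by Lemma~\ref{lem 3.1}, no summand of $\tau T$ lies in $\Delta$, and one checks that no nonzero morphism $T_1\to M$ with $M\in\Delta$ factors through $\textup{add}(\tau T)$ (a factorization $T_1\to \tau T_j\to M$ would force $\tau T_j$ into the relevant sectional region, contradicting rigidity of $T$ as in the proof of Lemma~\ref{lem 3.1}). Hence $\Hom_B(P_1,N)\cong \Hom_{\mathcal{C}}(T_1,M)$, which has dimension $2$ by Lemma~\ref{lem 3.1.2}(a).

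For part (b), I would use that $R_1=G(U_1)$ is the radical of $P_1$, together with the short exact sequence $0\to R_1\to P_1\to S_1\to 0$ of $B$-modules, where $S_1$ is the simple top of $P_1$. Applying $\Hom_B(-,N)$ gives a long exact sequence, and since $P_1$ is projective this reduces to
\begin{equation*}
0\to \Hom_B(S_1,N)\to \Hom_B(P_1,N)\to \Hom_B(R_1,N)\to \textup{Ext}^1_B(S_1,N)\to 0.
\end{equation*}
Here $\Hom_B(S_1,N)=0$ because $N$ is an indecomposable module in $\Omega$ whose socle contains no copy of $S_1$ — equivalently, the socle of the corresponding object $M\in\Delta$ in the tube, computed via the Auslander-Reiten formula, does not involve the vertex $1$; this needs a short separate argument from the position of $M$ relative to $\tau U_1$ and $\tau^2 T_1$. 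Combined with part (a), which gives $\dim\Hom_B(P_1,N)=2$, it remains to show $\dim\textup{Ext}^1_B(S_1,N)=1$. For this I would identify $\textup{Ext}^1_B(S_1,N)$ with (a piece of) $\Hom_{\mathcal{C}}(U_1,M)$ or with the cokernel term directly, and compare with Lemma~\ref{lem 3.1.2}(b),(c): the two cases there (whether or not $M$ lies on the sectional path from the mouth to $\tau U_1$) should correspond precisely to the two ways the dimension $2$ splits as $\dim\Hom_B(R_1,N)+\dim\textup{Ext}^1_B(S_1,N)$, and in both cases one extracts $\dim\Hom_B(R_1,N)=1$. Alternatively, and perhaps more cleanly, I would directly transport the computation of $\Hom_{\mathcal{C}}(U_1,M)$ through $G$ as in part (a), being careful that $U_1$ is of quasilength $r$ while $T_1$ has quasilength $r-1$, so the factorization-through-$\tau T$ analysis is slightly more delicate and must account for the summand(s) of $\tau T$ sitting near $\tau U_1$.

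The main obstacle I anticipate is the bookkeeping around which morphisms in $\mathcal{C}$ survive under $G$, i.e.\ controlling the ideal $(\textup{add}(\tau T))$ on the relevant Hom-spaces: a priori $G$ could kill some morphisms $U_1\to M$ or $T_1\to M$, which would drop the dimension. Lemma~\ref{lem 3.1} handles the targets, but one also needs that compositions $T_1\to \tau T_j\to M$ (resp.\ $U_1\to\tau T_j\to M$) vanish, which requires knowing the position of the $\tau T_j$ inside $\mathcal{T}$; since $T$ contains the sectional path $T_1\to\cdots\to T_{r-1}$ and is rigid, its $\tau$-shifts are confined to the wing $\mathcal{W}_{\tau T_1}$, disjoint from $\Delta$, so any such factorization is blocked. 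Making this precise — perhaps by invoking the description $X\in\Delta \iff X\notin\mathcal{W}_{\tau T_1}$ from \cite{BM} — is the one place where care is genuinely needed; the rest is a transcription of Lemma~\ref{lem 3.1.2}.
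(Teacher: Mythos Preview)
Your treatment of part (a) is essentially correct but overcomplicated: the reason no morphism $T_1\to M$ factors through $\textup{add}(\tau T)$ is simply that $\Hom_{\mathcal{C}}(T_1,\tau T_j)\cong D\,\textup{Ext}^1_{\mathcal{C}}(T_j,T_1)=0$ for every $j$, because $T_1$ is itself a summand of the rigid object $T$. There is no need to locate $\tau T_j$ relative to $\Delta$; the domain alone already kills every such composite. The paper's proof of (a) is exactly this one-line observation followed by Lemma~\ref{lem 3.1.2}(a).

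Part (b), however, contains a genuine error. Your concluding claim that ``any such factorization is blocked'' is precisely the opposite of what happens, and if it were true the lemma would be false: in case (b) of Lemma~\ref{lem 3.1.2} one has $\dim\Hom_{\mathcal{C}}(U_1,M)=2$, so a faithful transport through $G$ would yield $\dim\Hom_B(R_1,N)=2$, not $1$. The point is that $U_1$, unlike $T_1$, is \emph{not} a summand of $T$, so $\Hom_{\mathcal{C}}(U_1,\tau T_i)$ need not vanish; and indeed the sectional path from the mouth down to $M\in\Delta$ meets the sectional path from $\tau T_1$ up to the mouth in some $\tau T_i$, giving nonzero morphisms $U_1\to \tau T_i$ and $\tau T_i\to M$. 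The paper's proof shows explicitly that the basis vector of $\Hom_{\mathcal{C}}(U_1,M)$ coming from the $\textup{Ext}^1_{kQ}(\tau U_1,M)$-summand (when nonzero) factors through this $\tau T_i$, via a pushout diagram comparing two short exact sequences in $\textup{mod}\,kQ$. Thus in both cases of Lemma~\ref{lem 3.1.2} exactly one basis morphism survives in $\textup{mod}\,B$, and this is the whole content of (b).

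Your first route for (b), via the short exact sequence $0\to R_1\to P_1\to S_1\to 0$, does not avoid the difficulty: it trades the unknown $\dim\Hom_B(R_1,N)$ for the two unknowns $\dim\Hom_B(S_1,N)$ and $\dim\textup{Ext}^1_B(S_1,N)$, and you give no independent computation of either. (In the paper this exact sequence is used in the reverse direction, in Corollary~\ref{cor 3.2}, to deduce $\langle S_1,N\rangle=1$ \emph{from} Lemma~\ref{lem 3.2}.) So the factorization analysis for $U_1\to M$ cannot be bypassed; it is the heart of the argument.
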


\begin{proof}
 We recall from section \ref{sect 1} that $\textup{mod}\, B\cong \mathcal{C}/ \textup{add}\tau T$. Let $M\in \mathcal{C}$ be such that $\Hom_{\mathcal{C}}(T,M)=N$. We first observe that, because $T$ is a tilting object in $\mathcal{C}$, then no morphism from $T_1$ to $M$ factors through $\textup{add} \tau T$. Therefore Lemma \ref{lem 3.1} implies (a).
 
 In order to prove (b), we recall that, according to  Lemma \ref{lem 3.1}, a basis of the vector space $\Hom_{\mathcal{C}}(U_1,M)$ consists of at most two morphisms, one of them constituting a basis of $\Hom_{kQ}(U_1,M)$ and the other (if nonzero) constituting a basis of $\textup{Ext}^1_{kQ}(\tau U_1, M)$. Now, note that, if $g$ is a basis vector in $\Hom_{kQ}(U_1,M) $ then it clearly does not factor through $\textup{add}\,\tau T$ and so $\Hom_{\mathcal{C}}(T,g):R_1\to N$ is nonzero. Thus, in order to prove (b), it suffices to show that, if $\textup{Ext}^1_{kQ}(\tau U_1, M)\ne 0$, then a basis vector $\xi$ of this vector space factors through $\tau T$.
 
 We construct a morphism $f$ from $\tau T$ to $M$. Indeed the sectional path in $\mathcal{T}$ from the mouth to $M$ and the sectional path from $\tau T_1$ to the mouth intersect in an indecomposable summand $\tau T_i$ of $\tau T$. Let $f$ denote the composition of the morphisms lying on the sectional path from $\tau T_i$ to $M$. Then $f\ne 0$ (see \cite[Corollary IX 2.2]{ASS}).
 
 Now, observe that we have a nonsplit short exact sequence in $\textup{mod}\,kQ$
 \[\zeta: \xymatrix{0\ar[r]&\tau T_i\ar[r] & E\ar[r] & \tau U_1\ar[r] & 0},\]
 with $E$ indecomposable.
 Because $\textup{Ext}^1_{kQ}(\tau U_1, M)$ is one dimensional, there exists a scalar $\lambda\in k$ such that $\xi=\zeta(\lambda f)$, that is, we have a commutative diagram with exact rows
 \[ \xymatrix{\zeta: &0\ar[r]&\tau T_i\ar[r]\ar[d]_{\lambda f}& E\ar[r] \ar[d]& \tau U_1\ar[r]\ar[d]^= & 0\\
\xi:& 0\ar[r]& M\ar[r]&E\oplus F\ar[r]&\tau U_1\ar[r]&0.}
 \] 
 Indeed, the middle term of $\xi$ is of this form, as seen in the picture below. 
\[\xymatrix@!@R0pt@C0pt{  \ar@{--}[ddd]&&&\tau T_i\ar[rd]&&\cdot\ar[rd]  &&&&&~  \ar@{--}[ddd]\\
&&\cdot\ar[ru]&&\cdot\ar@{--}[rrdd]\ar@{--}[ru]&&\cdot\ar@{--}[rd] \\
&\cdot\ar@{--}[ru]\ar[rd]&&&&&&F\ar@{--}[rd] \\
\tau T_1\ar[ru]\ar[rd] \ar@{--}[dd]&&T_1\ar@{--}[rruu] &&&&M\ar@{--}[ru]\ar@{--}[rrdd]&&\tau^2T_1\ar[rd]&&\tau T_1\ar[dr] \ar@{--}[dd]\\
&U_1\ar[ru]&& &&&&\cdot\ar@{--}[ru] &&\tau U_1\ar[ru]&&U_1\\
&&&& &&\cdot\ar@{--}[ru]&&E\ar@{--}[ru]&&\\
}\]
The fact that $\xi=\zeta(\lambda f)$ shows that $\xi$ factors through $\tau T $ in $\mathcal{C}$. This completes the proof.
\end{proof}

\begin{corollary}
 \label{cor 3.2} 
 With the above notation, letting $S_1=\textup{top}\,P_1$, we have $\langle S_1, N\rangle = 1$, for all $N\in \Omega$.
\end{corollary}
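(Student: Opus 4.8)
The plan is to compute $\langle S_1, N\rangle = \dim\Hom_B(S_1,N) - \dim\Ext^1_B(S_1,N)$ by relating both terms to the data already established in Lemma~\ref{lem 3.2}, using the projective presentation of $S_1$. Since $S_1 = \textup{top}\,P_1$, we have a short exact sequence of $B$-modules
\[ 0 \to R_1 \to P_1 \to S_1 \to 0, \]
because $R_1 = \Hom_{\mathcal C}(T,U_1)$ was identified in subsection~\ref{sect 3.1} (the discussion preceding Lemma~\ref{lem 3.2}) with the radical of $P_1$. This is a projective presentation of $S_1$ (the first two terms being projective), so applying $\Hom_B(-,N)$ yields the long exact sequence
\[ 0 \to \Hom_B(S_1,N) \to \Hom_B(P_1,N) \to \Hom_B(R_1,N) \to \Ext^1_B(S_1,N) \to 0, \]
where the sequence terminates because $P_1$ is projective, so $\Ext^1_B(P_1,N)=0$, and of course $\Ext^1_B(R_1,N)$ does not enter. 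Taking the alternating sum of dimensions gives immediately
\[ \langle S_1, N\rangle = \dim\Hom_B(P_1,N) - \dim\Hom_B(R_1,N). \]

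By Lemma~\ref{lem 3.2}(a) the first term equals $2$, and by Lemma~\ref{lem 3.2}(b) the second term equals $1$, for every $N\in\Omega$. Hence $\langle S_1,N\rangle = 2-1 = 1$, which is the desired equality. So the corollary is essentially a one-line consequence of the preceding lemma once the exact sequence defining $S_1$ is written down; no case analysis is needed, since Lemma~\ref{lem 3.2} already absorbed the two cases (b) and (c) of Lemma~\ref{lem 3.1.2} into the single uniform statement $\dim\Hom_B(R_1,N)=1$.

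The only point requiring a small amount of care — and what I would flag as the ``main obstacle,'' though it is minor — is justifying that the displayed four-term exact sequence is exact on the right, i.e.\ that the connecting map $\Hom_B(R_1,N)\to\Ext^1_B(S_1,N)$ is surjective. This is automatic from $P_1$ being projective (so that $\Ext^1_B(P_1,N)=0$ kills the next term in the long exact sequence), but it is worth stating explicitly since it is what makes the Euler form collapse to a difference of $\Hom$-dimensions. One should also note that $S_1$ indeed has $P_1$ as its projective cover with kernel exactly $R_1$: this is precisely the content of the assertion, recalled before Lemma~\ref{lem 3.2}, that $R_1$ is the radical of $P_1$, together with $S_1 = P_1/\textup{rad}\,P_1$. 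With these remarks in place the proof is complete.
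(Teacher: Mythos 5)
Your proof is correct and is essentially identical to the paper's: both apply $\Hom_B(-,N)$ to the short exact sequence $0\to R_1\to P_1\to S_1\to 0$, use projectivity of $P_1$ to truncate the long exact sequence, and conclude $\langle S_1,N\rangle=\dim\Hom_B(P_1,N)-\dim\Hom_B(R_1,N)=2-1=1$ via Lemma~\ref{lem 3.2}.
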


\begin{proof}
 Recall that $\langle S_1, N\rangle = \textup{dim}\,\Hom_B(S_1,N)-\textup{dim\,Ext}^1_B(S_1,N).$ Now, applying the functor $\Hom_B(-,N)$ to the short exact sequence 
\[\xymatrix{0\ar[r]& R_1\ar[r]&P_1\ar[r]&S_1\ar[r]&0}\] in $\textup{mod}\, B$ yields an exact sequence
 \[ \xymatrix@C15pt{0\ar[r]&\Hom_B(S_1,N)\ar[r]&\Hom_B(P_1,N)\ar[r]&\Hom_B(R_1,N)\ar[r]&\textup{Ext}^1_B(S_1,N)\ar[r]&0}
 \]
 from which we deduce $\langle S_1, N\rangle =\textup{dim}\,\Hom_B(P_1,N)-\textup{dim\,Hom}_B(R_1,N)$, and the latter equals 1 because of the lemma.
\end{proof}

\subsection{Proof of Theorem \ref{theorem2} (b)} 
We are now in a position to prove the second part of our Theorem \ref{theorem2}.

\begin{theorem}
 \label{thm 3.3}
 Let $\mathcal{A}$ be a cluster algebra of Euclidean type and $x,x'$ be two cluster variables with $x'$ regular. Then $x$ and $x'$ are compatible if and only if, for any cluster $\mathbf{x}$ containing $x$, the Laurent expansion $\mathcal{L}(x',\mathbf{x})$ has no $x$ in its denominator.
\end{theorem}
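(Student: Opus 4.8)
The plan is to reduce the statement to the results of Section~\ref{sect 2} in the case where $x$ is transjective, and to handle the genuinely new case, where $x$ is itself regular, by the denominator-vector analysis developed in subsections \ref{sect 3.1} and the preceding two. One implication is free from Lemma~\ref{lem 2.1}: if $x$ and $x'$ are compatible, then $\mathcal{L}(x',\mathbf{x})$ has no $x$ in the denominator for every cluster $\mathbf{x}$ containing $x$. So I only need the converse, and I may assume that $x$ and $x'$ are \emph{not} compatible and produce a cluster $\mathbf{x}$ containing $x$ such that $x$ does appear in the denominator of $\mathcal{L}(x',\mathbf{x})$.

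First I would dispose of the case where $x$ is transjective: this is exactly Theorem~\ref{thm 2.2}, so nothing new is needed. Hence assume $x$ is regular, lying in a tube $\mathcal{T}$ of rank $r$. The object $M$ of $\mathcal{C}$ with $X^T_M = x$ is an indecomposable rigid object of $\mathcal{T}$, hence of quasilength at most $r-1$; after possibly rotating within the tube I may take it to be $T_1$, an indecomposable rigid object of quasilength $r-1$, exactly the setup of subsection \ref{sect 3.1}. Choose the tilting object $T$ having $T_1,\dots,T_{r-1}$ as summands, and let $B=\mathrm{End}_{\mathcal C}T$ be the associated cluster-tilted algebra, with $P_1=\Hom_{\mathcal C}(T,T_1)$ projective and $I_1=\Hom_{\mathcal C}(T,\tau^2T_1)$ the corresponding injective. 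Now let $\mathbf{x}$ be the cluster corresponding to the tilting object obtained from $T$ by replacing $T_1$ by its exchange partner $U_1^{\ast}$ (the mutation $\mu_{T_1}$), equivalently the cluster containing $x' := X^{T}_{\tau^2 T_1}$ in place of $x$ — more precisely I take $\mathbf{x}$ to be a cluster containing $x$ which is adapted so that $x' = X^T_{M'}$ with $M'$ an object whose image $N=\Hom_{\mathcal C}(T,M')$ lies in $\Omega$ whenever $x$ and $x'$ are incompatible. The key input is that incompatibility of $x=X^T_{T_1[1]}$ and $x'=X^T_{M'}$ forces, via Lemma~\ref{lem 2.2}, that $\Hom_{\mathcal C}(T_1,M')\neq 0$; since every rigid $M'$ in $\mathcal{T}$ with $\Hom_{\mathcal C}(T_1,M')\neq 0$ belongs to $\Delta$ (as remarked in subsection \ref{sect 3.1}), we get $N=\Hom_{\mathcal C}(T,M')\in\Omega$.

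With $N\in\Omega$ in hand, the denominator vector of $x'$ in the cluster $\mathbf{x}$ is controlled by $\Hom$-dimensions over $B$. The subtlety is that the BMR denominator formula $\mathbf{den}(x') = \mathbf{dim}\,\Hom_{\mathcal C}(T,M')$ is \emph{not} available here in the form used in Section~\ref{sect 2}, because $T$ has a regular summand $T_1$ of quasilength $r-1$ in a tube of rank $r$ — precisely the excluded case in the quoted result of \cite{BMRdenominators}. This is the main obstacle, and the way around it is the computation already assembled: Corollary~\ref{cor 3.2} shows $\langle S_1,N\rangle = 1$ for all $N\in\Omega$, where $S_1=\mathrm{top}\,P_1$ is the simple $B$-module at the vertex $1$ corresponding to $x$. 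I would argue that the $x$-component $d_1$ of $\mathbf{den}(x')$ is computed from $\langle S_1,N\rangle$ (respectively from the relevant multiplicity of $S_1$ in a composition series / from the $g$-vector pairing), and that Corollary~\ref{cor 3.2} gives $d_1 = 1 > 0$, so $x$ genuinely appears in the denominator of $\mathcal{L}(x',\mathbf{x})$. Thus incompatibility implies that the Laurent expansion has $x$ in the denominator, which is the contrapositive of the sufficiency direction.

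Finally I would double-check the two boundary situations: (i) if $x'$ is transjective rather than regular, then $\Hom_{\mathcal C}(T,M')$ has finite support meeting the tube $\mathcal{T}$, and I must still exhibit $d_1>0$ — here $\Hom_{\mathcal C}(T_1,M')\neq 0$ again puts $M'$ in the relevant region and the same $\langle S_1,-\rangle$ computation applies; (ii) if $M'$ happens to be one of the $T_i[1]$, i.e.\ $x'\in\mathbf{x}$ for the chosen $\mathbf{x}$, then $x$ and $x'$ would be compatible, contrary to assumption, so this does not occur. Assembling these, sufficiency follows, and together with Lemma~\ref{lem 2.1} this proves the theorem. The one step I expect to require genuine care is the passage from the $\Hom_B$-computation of Corollary~\ref{cor 3.2} to the statement that the corresponding denominator-vector component is nonzero, i.e.\ repairing the BMR formula exactly at the excluded quasilength-$(r-1)$ summand; everything else is bookkeeping in the tube.
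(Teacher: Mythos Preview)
Your overall shape is right --- reduce via Lemma~\ref{lem 2.1} and Theorem~\ref{thm 2.2} to the case where both $x$ and $x'$ are regular, then use the setup of Section~\ref{sect 3} --- but there are two genuine gaps.

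First, you cannot ``rotate within the tube'' to make the object corresponding to $x$ have quasilength $r-1$: applying $\tau$ preserves quasilength. The paper instead splits into two cases. If the indecomposable rigid object $T_1$ with $X^T_{T_1[1]}=x$ has quasilength at most $r-2$, then one can choose a tilting object $T$ containing $T_1$ with no regular summand of quasilength $r-1$ in a rank-$r$ tube, so the denominator formula of \cite{BMRdenominators} \emph{does} apply and the argument of Theorem~\ref{thm 2.2} goes through verbatim. Only when $T_1$ has quasilength exactly $r-1$ is the machinery of subsections~\ref{sect 3.1}--3.2 invoked. (You also need to note that incompatibility forces $x$ and $x'$ to lie in the \emph{same} tube, since there are no extensions between distinct tubes.)

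Second --- and this is the step you yourself flag as needing ``genuine care'' --- the passage from $\langle S_1,N\rangle=1$ to $d_1>0$ is not a repair of the BMR formula, and you are missing the key idea. The paper writes out the cluster character
\[
X_M^T = \sum_{\mathbf{e}} \chi(\textup{Gr}_{\mathbf{e}}(\Hom_{\mathcal{C}}(T,M)))\prod_{i} x_i^{\langle S_i,\mathbf{e}\rangle_a - \langle S_i,\Hom_{\mathcal{C}}(T,M)\rangle}
\]
and observes that the summand for $\mathbf{e}=0$ is the single Laurent monomial $\prod_i x_i^{-\langle S_i,\Hom_{\mathcal{C}}(T,M)\rangle}$, which by Corollary~\ref{cor 3.2} carries $x_1$ to the power $-1$. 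The crucial input you omit is the \emph{positivity theorem} \cite{LS4}: since every Euler characteristic $\chi(\textup{Gr}_{\mathbf{e}})$ is nonnegative, this monomial cannot be cancelled by the other summands, and hence $x=x_1$ genuinely survives in the denominator of $X_M^T$. Without positivity there is no reason the $\mathbf{e}=0$ contribution persists, and your sketch offers no substitute. (Two smaller points: your ``boundary case (i)'', where $x'$ is transjective, contradicts the hypothesis that $x'$ is regular; and the paragraph about mutating $T_1$ out of $T$ to obtain a cluster containing $x$ is self-contradictory --- the cluster $\mathbf{x}$ you want is simply the one corresponding to $T$ itself.)
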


\begin{proof}
 Because of Lemma \ref{lem 2.1}, we only need to prove the sufficiency. Assume that $x,x'$ are not compatible, with $x'$ regular. Because of Theorem \ref{thm 2.2}, we may assume that $x$ is regular as well. Since there are no extensions between different tubes, we may assume that they correspond to objects in the cluster category lying in the same tube $\mathcal{T}$. We denote by $r$ the rank of $\mathcal{T}$. Let $T_1$ and $M$ be such that $X^T_{T_1[1]}=x$ and $X^T_M=x'$. If the quasilength of $T_1$ is at most $r-2$ then, because of \cite{BMRdenominators}, we have ${\textbf{den}} \, x' ={\textbf{dim}}\,\Hom_{\mathcal{C}}(T,M)$. Applying the same argument as in the proof of Theorem \ref{thm 2.2} yields the conclusion.  
 
 Assume thus that the quasilength of $T_1$ is $r-1$. We may assume that the objects lying on the sectional path from $T_1$ to the mouth are summands of the tilting object $T$. Lemma \ref{lem 2.2} implies that $\Hom_{\mathcal{C}}(T_1,M)\ne 0$, and since $M$ is an exceptional object in $\mathcal{T}$, we have  $M\in \Delta$. We now recall that
 \[
 X_M^T= \sum_{\mathbf{e}\in \mathbb{N}^{\mathbb{Q}_0}}
\chi(\textup{Gr}_{\mathbf{e}}(\Hom_{\mathcal{C}}(T,M))
\prod_{i\in Q_0} x_i^ {\langle S_i,\mathbf{e}\rangle_a-\langle S_i, \Hom_{\mathcal{C}}(T,M)\rangle} .\]
The summand corresponding to the vector $\mathbf{e}=0$ is
\[\prod_{i\in Q_0} x_i^ {-\langle S_i, \Hom_{\mathcal{C}}(T,M)\rangle} .\]
Because of Corollary \ref{cor 3.2}, we have 
\[\langle S_1, \Hom_{\mathcal{C}}(T,M)\rangle =1.\]
In other words, the Laurent monomial 
\[\prod_{i\in Q_0} x_i^ {-\langle S_i, \Hom_{\mathcal{C}}(T,M)\rangle} \]
contains the variable $x(=x_1)$ in its denominator. Because of the positivity theorem (see \cite{LS4}), this Laurent monomial is not cancelled by other summands. Therefore we have 
\[X^T_M=\frac{p(\mathbf{x})}{x^d\,m(\mathbf{x}\setminus\{x\})}\]
with $d>0$. This completes the proof.
\end{proof}

\section{Unistructurality and automorphisms}\label{sect 5}

\subsection{} 
We now define the notion of unistructurality.

Let $\mathcal{A}=\mathcal{A}(\mathbf{x},Q)$ be a cluster algebra with initial seed $(\mathbf{x},Q)$. This initial seed and the mutation process yield a family $(\mathbf{x}_{\alpha})_{\alpha\in\Gamma_0}$ of clusters, where $\Gamma_0$ is the set of vertices of the corresponding exchange graph $\Gamma$.
Let $\mathcal{X}=\cup_{\alpha\in\Gamma_0}\mathbf{x}_\alpha$ be the set of cluster variables.

We say that $\mathcal{A}$ is \emph{unistructural} if, for any subset $\mathbf{x}'$ of $\mathcal{X}$ and quiver $Q'$ such that the pair $(\mathbf{x}',Q')$ generates by mutation the same set $\mathcal{X}$ of cluster variables, then the exchange graphs and the set of clusters of $\mathcal{A}$ and $\mathcal{A}(\mathbf{x}',Q')$ are the same. More precisely,
if $(\mathbf{x}',Q')$ generates by mutation a family of clusters $(\mathbf{x}'_\beta)_{\beta\in\Gamma'_0}$ where $\Gamma_0'$ is the set of vertices of the corresponding exchange graph $\Gamma'$, then the equality 
$\mathcal{X}=\cup_{\beta\in\Gamma'_0}\mathbf{x}'_\beta$ of the set of cluster variables implies that $\Gamma=\Gamma'$ and there exists a permutation $\sigma$ of $\Gamma_0$ such that $\mathbf{x}_\alpha = \mathbf{x}'_{\sigma(\alpha)} $ for any $\alpha$.

Notice that under the hypothesis that $\mathcal{X}=\cup _{\beta\in\Gamma'_0}\mathbf{x}'_\beta$ the ranks of $\mathcal{A}$ and $\mathcal{A}(\mathbf{x}',Q
)$
are necessarily the same: indeed, this rank is the cardinality of a transcendence basis of the (common) ambient field.

 As stated in the introduction, we conjecture that every cluster algebra is unistructural. In this section, we prove the conjecture for cluster algebras of Dynkin type and for those  of rank two. For this proof we use Theorem \ref{theorem1} which we prove first in the two following lemmata. 


\begin{lemma}
\label{lemma_5.1}
Let $Q$ be a Dynkin quiver, or a Euclidean quiver or a quiver with two points.  Then $f: \mathcal{A}\to \mathcal{A}$ is a cluster automorphism if and only if $f$ is an automorphism of the ambient field which restricts to a permutation of the set of cluster variables. 
\end{lemma}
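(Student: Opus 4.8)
The plan is to establish both implications, of which only the ``if'' direction requires real work since cluster automorphisms always permute cluster variables by \cite[Corollary 2.7]{ASS}. So assume $f$ is a $\mathbb{Q}$-automorphism of $\mathcal{F}$ restricting to a permutation of $\mathcal{X}$; I must show $f$ sends clusters to clusters and commutes with mutations. The key observation is that both of these properties can be reduced to showing that $f$ preserves \emph{compatibility} of cluster variables: if $f$ maps every pair of compatible cluster variables to a compatible pair (and likewise reflects compatibility, which follows by applying the argument to $f^{-1}$, also an automorphism restricting to a permutation of $\mathcal{X}$), then $f$ maps every maximal set of pairwise compatible variables (i.e.\ every cluster, since in all the cases at hand clusters are exactly the maximal compatible sets) to a cluster. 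Commutation with mutations then follows because the exchange relation $x_k' x_k = \prod_{i\to k} x_i + \prod_{k\to i} x_i$ is, up to the combinatorics of which neighbouring clusters share $n-1$ variables, determined inside $\mathcal{F}$: once $f$ carries the cluster $\mathbf{x}$ and its neighbour $\mu_k(\mathbf{x})$ to clusters sharing $n-1$ variables, $f(x_k')$ is forced to be the mutation of $f(\mathbf{x})$ in the corresponding direction because $f(x_k)f(x_k')$ equals $f$ applied to a binomial that, by positivity/irreducibility, can only be the exchange binomial of that seed.

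So the heart of the proof is: \textbf{$f$ preserves compatibility}. This is exactly where Theorem~\ref{theorem2} enters, and where the case division Dynkin / Euclidean / rank $2$ matters. Take two compatible cluster variables $x, x'$; I want $f(x), f(x')$ compatible. By Lemma~\ref{lem 2.1}, for any cluster $\mathbf{x}$ containing $x$ the expansion $\mathcal{L}(x',\mathbf{x})$ has no $x$ in the denominator; applying the field automorphism $f$ to this Laurent identity, and using that $f$ permutes $\mathcal{X}$ so that $f(\mathbf{x})$ is at least a set of $n$ cluster variables algebraically independent over $\mathbb{Q}$, I get that $f(x')$ expressed in terms of the tuple $f(\mathbf{x})$ is a Laurent polynomial with no $f(x)$ in the denominator. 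The subtlety is that a priori $f(\mathbf{x})$ need not be a genuine cluster — but in the Dynkin and rank $2$ cases every cluster variable is transjective, and in the Euclidean case one handles transjective and regular $x$ separately; in each situation Theorem~\ref{theorem2}(a) or (b) provides the converse direction (``no $x$ in the denominator'' $\Rightarrow$ compatible) once we know the relevant variable is transjective resp.\ regular. Thus one argues: for a compatible pair, some cluster containing $f(x)$ (resp.\ $f(x')$) exists because $f$ maps \emph{some} cluster to a cluster — start from a cluster realized as a slice so that $f$ of it is again a transcendence basis consisting of cluster variables, and bootstrap. Concretely, in the Dynkin case $\mathcal{X}$ is finite and the maximal cliques of the compatibility graph are precisely the clusters, so preservation of compatibility is literally equivalent to preservation of the cluster structure, and Corollary~\ref{cor 2.2} gives the denominator criterion that $f$ manifestly respects.

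In more detail for the argument, I would proceed in these steps. First, record that $f$ maps at least one cluster to a cluster: pick the initial cluster $\mathbf{x}$; $f(\mathbf{x})$ is a set of $n$ cluster variables and a transcendence basis of $\mathcal{F}$, and by the combinatorics of the exchange graph together with Theorem~\ref{theorem2} applied to the pairs from $\mathbf{x}$ (all of which are compatible) one concludes $f(\mathbf{x})$ is a maximal compatible set, hence a cluster (using that in Dynkin/rank-$2$ types and for transjective variables in Euclidean type, a maximal compatible set is a cluster). Second, given any cluster $\mathbf{y}$ and any cluster $\mathbf{y}'=\mu_k(\mathbf{y})$ adjacent to it, deduce that $f(\mathbf{y})$ and $f(\mathbf{y}')$ are adjacent clusters and that $f$ intertwines $\mu_k$ with the corresponding mutation — propagating from the initial cluster along the connected exchange graph. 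Third, conclude $f$ is a cluster automorphism. The main obstacle I anticipate is precisely the point where one transfers ``no $x$ in the denominator after applying $f$'' into ``$f(x), f(x')$ compatible'': Theorem~\ref{theorem2} is not symmetric in $x$ and $x'$ and requires one of the two variables to be transjective, or regular in a tube; so the Euclidean case forces a careful split into subcases according to whether $x,x'$ are transjective or regular, invoking part (a) when one of them is transjective and part (b) when both are regular (necessarily in the same tube, since there are no extensions across tubes), and it is the interplay of these cases — together with making sure $f$ of a cluster is genuinely a cluster and not merely a transcendence basis of cluster variables — that constitutes the real content of the lemma.
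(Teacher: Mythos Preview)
Your overall strategy — reduce to showing that $f$ preserves compatibility, and use Theorem~\ref{theorem2} as the bridge between compatibility and denominator behaviour — is the right one, but your execution diverges from the paper's in a way that creates a real gap.

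The paper runs the argument by contradiction and in the \emph{opposite direction} from yours. It supposes some cluster $\mathbf{u}$ has $f(\mathbf{u})$ not a cluster, picks compatible $u,v\in\mathbf{u}$ whose images $y=f(u)$, $x=f(v)$ are \emph{incompatible}, and then applies Theorem~\ref{thm 2.2} (or Theorem~\ref{thm 3.3} in the regular case) to the incompatible pair $(y,x)$ with respect to a \emph{genuine} cluster $\mathbf{x}$ of $\mathcal{A}$ containing $x$. This yields a Laurent expression $y=p(\mathbf{x})/(x^{d}m(\mathbf{x}\setminus\{x\}))$ with $d>0$; applying $f^{-1}$ then gives an expression for $u$ with $v$ in the denominator, and Lemma~\ref{lem 2.1} finishes. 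The point is that the ``hard'' theorem (Theorem~\ref{theorem2}) is invoked on the side where one already has a bona fide cluster.

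By contrast, you start from the compatible pair, apply Lemma~\ref{lem 2.1} on a genuine cluster, push forward by $f$, and then want to invoke Theorem~\ref{theorem2} to conclude that $f(x),f(x')$ are compatible. But Theorem~\ref{theorem2} is stated as ``compatible $\Leftrightarrow$ for \emph{every} cluster containing $x$, no $x$ in the denominator''; to deduce compatibility you would need the denominator condition for all clusters containing $f(x)$, whereas you only know it for the single tuple $f(\mathbf{x})$ — and you yourself note that $f(\mathbf{x})$ is not known to be a cluster. Your proposed fix (``bootstrap from a slice whose image is a transcendence basis of cluster variables'') does not get off the ground: being a transcendence basis of cluster variables is not the same as being a cluster, so the induction never starts. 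This is the missing idea, and the paper's remedy is simply to swap the roles of the two implications and work with $f^{-1}$ on a genuine cluster.

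Two further simplifications relative to your outline: first, once $f$ is shown to map clusters to clusters, commutation with mutation is automatic by \cite[Corollary~2.7]{ASS1}, so your separate argument about exchange binomials is unnecessary. Second, you do not need to treat ``clusters $=$ maximal compatible sets'' as an extra ingredient; the paper's contradiction argument avoids that step entirely.
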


\begin{proof} According to the remark following Conjecture \ref{conjecture1}, it suffices to show sufficiency. Let $f$ be an automorphism of the ambient field which restricts to a permutation of the set of cluster variables. 
Because of Corollary 2.7 of \cite{ASS1}, it suffices to prove that $f$ maps every cluster of $\mathcal A$ onto another cluster. Let $\mathbf u$ be a cluster in $\mathcal A$ such that $f(\mathbf u)$ is not a cluster. Then $\mathbf u$ contains two compatible cluster variables $u$ and $v$ whose images $y=f(u)$ and $x=f(v)$ are not compatible. 

Assume first that $x$ is transjective. Then, due to Theorem \ref{thm 2.2} , for any cluster $\mathbf x$ containing $x$, the expansion $\mathcal L(y, \mathbf x)$ has $x$ in its denominator, that is 
\begin{equation*}
y = \mathcal L(y, \mathbf x) = \frac{p(\mathbf x)}{x^d\,m(\mathbf x\setminus \{x\})}
\end{equation*}
with $d>0$ and $x$ not dividing $p(\mathbf x)$. Because $f$ is a field automorphism, so is $f^{-1}$ and we have 
\begin{equation*}
u =  \frac{p(f^{-1}(\mathbf x))}{v^d\,m(f^{-1}(\mathbf x)\setminus \{v\})}
\end{equation*}
with $d>0$ and $v$ not dividing $p(f^{-1}(\mathbf x))$. But   because of Lemma \ref{lem 2.1}, this implies that $u$ and $v$ are not compatible, a contradiction. 

We are thus reduced to the case where $x$ and $y$ are both regular cluster variables. In this case, because of Theorem \ref{thm 3.3}, for any cluster $\mathbf x$ containing $x$, the expansion $\mathcal L(y, \mathbf x)$ has $x$ in its denominator and the above reasoning finishes the proof. 
\end{proof}

\begin{lemma}
Let $\mathcal{A}$ be a unistructural cluster algebra. Then $f: \mathcal{A}\to \mathcal{A}$ is a cluster automorphism if and only if $f$ is an automorphism of the ambient field which restricts to a permutation of the set of cluster variables. 
\end{lemma}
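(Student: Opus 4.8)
The plan is the following. The ``only if'' direction requires nothing new: a cluster automorphism of $\mathcal{A}$ maps every cluster onto a cluster by \cite[Corollary 2.7]{ASS}, hence in particular permutes the cluster variables; and being a $\mathbb{Z}$-algebra automorphism of $\mathcal{A}$ it extends uniquely to an automorphism of $\mathrm{Frac}(\mathcal{A})=\mathcal{F}$. So all the content is in the converse, and I would proceed exactly as in Lemma~\ref{lemma_5.1}: by \cite[Corollary 2.7]{ASS1} it suffices to show that $f$ maps every cluster of $\mathcal{A}$ onto a cluster of $\mathcal{A}$ (this is what makes $f$ commute with mutations and hence be a cluster automorphism).

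So let $f\colon\mathcal{F}\to\mathcal{F}$ be an automorphism of the ambient field restricting to a permutation of $\mathcal{X}$, and fix an arbitrary cluster $\mathbf{x}_\alpha$ of $\mathcal{A}$ with seed $(\mathbf{x}_\alpha,Q_\alpha)$. Set $\mathbf{x}'=f(\mathbf{x}_\alpha)$. Since $f$ permutes $\mathcal{X}$, the set $\mathbf{x}'$ is an $n$-element subset of $\mathcal{X}$, algebraically independent over $\mathbb{Q}$ because $f$ is a field automorphism. Let $Q'$ be the quiver obtained from $Q_\alpha$ by relabelling its vertices along the bijection $x\mapsto f(x)$, so that $(\mathbf{x}',Q')$ is a genuine seed. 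The key point is that $f$ intertwines the two mutation processes: since $f$ is a ring homomorphism it carries the exchange relation at a vertex $k$ in $(\mathbf{x}_\alpha,Q_\alpha)$ to the exchange relation at $k$ in $(\mathbf{x}',Q')$, and the combinatorial mutation rule for the quiver is visibly preserved by the relabelling, so $f(\mu_k(\mathbf{x}_\alpha,Q_\alpha))=\mu_k(\mathbf{x}',Q')$. Iterating, $f$ induces a bijection between the seeds of $\mathcal{A}(\mathbf{x}_\alpha,Q_\alpha)=\mathcal{A}$ and those of $\mathcal{A}(\mathbf{x}',Q')$ carrying each cluster $\mathbf{x}_\beta$ to $f(\mathbf{x}_\beta)$; in particular the set of cluster variables of $\mathcal{A}(\mathbf{x}',Q')$ is $f(\mathcal{X})=\mathcal{X}$.

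Now $(\mathbf{x}',Q')$ is exactly the kind of pair in the definition of unistructurality: a subset of $\mathcal{X}$ together with a quiver generating by mutation the same set $\mathcal{X}$. Unistructurality of $\mathcal{A}$ then forces the clusters of $\mathcal{A}(\mathbf{x}',Q')$ to coincide with the clusters of $\mathcal{A}$; and since $\mathbf{x}'$ is the initial cluster of $\mathcal{A}(\mathbf{x}',Q')$, it is a cluster of $\mathcal{A}(\mathbf{x}',Q')$, hence a cluster of $\mathcal{A}$. As $\alpha$ was arbitrary, $f$ maps every cluster of $\mathcal{A}$ onto a cluster, and the cited corollary finishes the proof.

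I do not expect a genuine obstacle here, since unistructurality was tailored precisely to make this argument go through; the only thing to be careful about is the bookkeeping in the previous paragraph, namely verifying honestly that the pair $(\mathbf{x}',Q')$ satisfies the hypothesis of the definition of unistructurality (that $\mathbf{x}'\subseteq\mathcal{X}$ and that the mutation class of $(\mathbf{x}',Q')$ reproduces all of $\mathcal{X}$, not merely a subset) and that ``$f$ sends every cluster to a cluster'' is indeed what is needed to invoke \cite[Corollary 2.7]{ASS1}.
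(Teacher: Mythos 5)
Your proposal is correct and follows essentially the same route as the paper: transport the seed by $f$, observe that $f$ intertwines the mutation processes so the new seed generates the same set $\mathcal{X}$ of cluster variables, invoke unistructurality to conclude $f$ sends clusters to clusters, and finish with \cite[Corollary 2.7]{ASS1}. The only cosmetic difference is that you run the argument for an arbitrary cluster while the paper does it for one fixed initial seed before citing the corollary.
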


\begin{proof} Again because of the remark following Conjecture \ref{conjecture1}, it suffices to show sufficiency.
Let $f:\mathcal A \to \mathcal A$ be an automorphism of the ambient field permuting the set of cluster variables. Let $\mathbf x= (x_1, \dots, x_n)$ and $(\mathbf x, Q)$ be a seed of the cluster algebra $\mathcal A$. Consider a cluster algebra $\mathcal A_1$ generated by the initial seed $(f(\mathbf x), Q)$ where the variable $f(x_i)$ is associated with the same point $i\in Q_0$ as the variable $x_i$ in the seed $(\mathbf x, Q)$. 

Since the two seeds have the same quiver $Q$, and $f$ is a homomorphism of the field, the variables obtained by successive mutations of the seed $(f(\mathbf x), Q)$ are exactly the images under $f$ of the cluster variables obtained by the corresponding mutations of the seed $(\mathbf x, Q)$. 

Therefore, since $f$ is an automorphism permuting cluster variables of $\mathcal A$, the sets of cluster variables of $\mathcal A$ and of $\mathcal A_1$ coincide and $\mathcal A=\mathcal A_1$ as $\mathbb Z$-algebras. Because $\mathcal A$ is unistructural, the decomposition of the set of cluster variables into clusters is unique. Thus $f(\mathbf x)$ is a cluster of $\mathcal A$. By Corollary 2.7 of \cite{ASS1}, $f$ is a cluster automorphism. 
\end{proof}

We end the paper with a discussion of  unistructurality. For cluster algebras of Dynkin type,  unistructurality is derived from Theorem \ref{theorem1}, which we just proved (in Lemma \ref{lemma_5.1}). 

\begin{theorem}
\label{thm_DynkinUni}
Let $\mathcal A$ be a cluster algebra of Dynkin type, then $\mathcal A$ is unistructural. 
\end{theorem}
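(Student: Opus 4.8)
We want to show that a cluster algebra $\mathcal{A}$ of Dynkin type is unistructural, i.e.\ that the decomposition of the set $\mathcal{X}$ of cluster variables into clusters is intrinsic to $\mathcal{X}$. The key point is that, by Corollary~\ref{cor 2.2} (which holds precisely because in Dynkin type all cluster variables are transjective), compatibility of two cluster variables $x,x'$ is detected purely by the Laurent-expansion condition: $x$ and $x'$ are compatible if and only if for every cluster $\mathbf{x}$ containing $x$ the expansion $\mathcal{L}(x',\mathbf{x})$ has no $x$ in the denominator. I would first argue that this compatibility relation is an invariant of the abstract ring $\mathcal{A}$ together with its set $\mathcal{X}$ of cluster variables, \emph{independent of the chosen seed}: the ambient field $\mathcal{F}$ is $\operatorname{Frac}(\mathcal{A})$, and the Laurent expansion of $x'$ with respect to any cluster $\mathbf{x}$ is simply the expression of the element $x'\in\mathcal{F}$ as a rational function in the algebraically independent set $\mathbf{x}\subseteq\mathcal{X}$; so ``$\mathcal{L}(x',\mathbf{x})$ has no $x$ in the denominator'' is a statement about elements of $\mathcal{F}$ only.

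**From compatibility to clusters.** The next step is to recover the clusters from the compatibility relation. In Dynkin type the cluster complex is known to be the clique complex of the compatibility graph: a subset of $\mathcal{X}$ is a cluster if and only if it is a maximal pairwise-compatible subset, and every such maximal subset has exactly $n=\operatorname{rk}\mathcal{A}$ elements (this is classical Fomin--Zelevinsky theory for finite type; I would cite \cite{FZ4}). Hence the set of clusters of $\mathcal{A}$ is determined by the pair $(\mathcal{X},\text{compatibility})$, which by the previous paragraph is determined by $\mathcal{X}$ together with the ring structure. Now suppose $(\mathbf{x}',Q')$ is any seed generating by mutation the same set $\mathcal{X}$. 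Its ambient field is again $\operatorname{Frac}(\mathcal{A})=\mathcal{F}$, its rank equals $n$ (as noted in the text, $n$ is the transcendence degree of $\mathcal{F}$), and its notion of compatibility is governed by the same Laurent-expansion criterion — but here one must know that $\mathcal{A}(\mathbf{x}',Q')$ is itself of finite type so that Corollary~\ref{cor 2.2} applies to it. This follows because $\mathcal{A}(\mathbf{x}',Q')$ has finitely many cluster variables (namely those in $\mathcal{X}$), and a cluster algebra with finitely many cluster variables is of finite, hence Dynkin, type (again \cite{FZ4}).

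**Matching the exchange graphs.** With both cluster algebras having the same set of clusters $\{$maximal compatible subsets of $\mathcal{X}\}$, it remains to see that the exchange graphs $\Gamma$ and $\Gamma'$ coincide. But the exchange graph in finite type is also intrinsic: two clusters are joined by an edge if and only if they share exactly $n-1$ elements, and this combinatorial recipe produces the same graph for $\mathcal{A}$ and for $\mathcal{A}(\mathbf{x}',Q')$ since they have the same vertex set. Thus $\Gamma=\Gamma'$, and choosing any bijection realizing the identification of vertices gives the required permutation $\sigma$ of $\Gamma_0$ with $\mathbf{x}_\alpha=\mathbf{x}'_{\sigma(\alpha)}$. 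This establishes unistructurality.

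**Main obstacle.** The delicate point is not the abstract recovery of clusters from compatibility — that is standard finite-type combinatorics — but rather making sure the compatibility \emph{relation itself} is genuinely seed-independent and applies equally to the competing structure $(\mathbf{x}',Q')$. Concretely, one must check: (i) that $\mathcal{A}(\mathbf{x}',Q')$ is of Dynkin type (so Corollary~\ref{cor 2.2} is available for it), which I would get from the finiteness of $\mathcal{X}$ via the Fomin--Zelevinsky finite-type classification; and (ii) that the Laurent-expansion criterion really only sees $\mathcal{X}$ and $\mathcal{F}$, not the seed. Point (ii) is where I would spend the most care, since one has to confirm that ``for \emph{any} cluster $\mathbf{x}$ containing $x$'' ranges over the same family of subsets of $\mathcal{X}$ regardless of which initial seed one started from — which is exactly the statement that the two cluster structures have the same clusters, so the argument must be organized to avoid circularity, e.g.\ by first fixing one cluster containing $x$ (which both structures possess, by the existence statement used in the proof of Theorem~\ref{thm 2.2}) and noting that one instance of the criterion already forces incompatibility in one direction, then appealing to Lemma~\ref{lem 2.1} for the converse.
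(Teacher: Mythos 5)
Your overall strategy --- recover the clusters as the maximal pairwise-compatible subsets of $\mathcal{X}$, using Corollary \ref{cor 2.2} to make compatibility ``intrinsic'' --- founders on exactly the point you flag as the main obstacle, and the fix you sketch does not close the gap. Corollary \ref{cor 2.2} and Lemma \ref{lem 2.1} are statements about the clusters \emph{of a given cluster structure}: to test whether $z_i$ and $z_j$ are compatible in $\mathcal{A}$ via the Laurent criterion, you must expand with respect to an $\mathcal{A}$-cluster containing $z_i$. Now suppose $z_i,z_j$ lie in a common cluster $\mathbf{z}$ of the competing structure $\mathcal{A}(\mathbf{x}',Q')$ and you want to conclude that they are $\mathcal{A}$-compatible. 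The only transcendence basis in which the expansion of $z_j$ is visibly denominator-free is $\mathbf{z}$ itself, but $\mathbf{z}$ is only known to be an $\mathcal{A}(\mathbf{x}',Q')$-cluster; you may not feed it into Corollary \ref{cor 2.2} for $\mathcal{A}$ unless you already know it is an $\mathcal{A}$-cluster, which is the conclusion you are after. Symmetrically, knowing that $\mathcal{L}(z_j,\mathbf{w})$ has $z_i$ in its denominator for every $\mathcal{A}$-cluster $\mathbf{w}$ containing $z_i$ contradicts nothing, because Lemma \ref{lem 2.1} applied to the other structure only controls expansions with respect to $\mathcal{A}(\mathbf{x}',Q')$-clusters. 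Your parenthetical ``one cluster containing $x$ which both structures possess'' is precisely what is not available: there is no a priori reason the two structures share even a single cluster. So the compatibility relation has not been shown to be seed-independent, and the clique-complex step has nothing intrinsic to take the clique complex of.

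The paper circumvents this by never comparing the two compatibility relations directly. It first notes that $|\mathcal{X}|$ determines the Dynkin type, so the two structures admit seeds $(\mathbf{x},Q)$ and $(\mathbf{y},Q)$ with the \emph{same} quiver; the assignment $x_i\mapsto y_i$ then extends to an automorphism $f$ of the ambient field which intertwines the two mutation processes and hence permutes $\mathcal{X}$. Lemma \ref{lemma_5.1} (Theorem \ref{theorem1} in the Dynkin case, proved just before) then shows that $f$ is a cluster automorphism, so it sends $\mathcal{A}$-clusters to $\mathcal{A}$-clusters; since it also sends them bijectively onto the clusters of the competing structure, the two sets of clusters coincide. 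If you want to keep your compatibility-based approach, you would need to supply the missing bridge between the two structures, and the natural way to do so is exactly this automorphism. (Separately, a minor point: the description of the cluster complex in finite type as the clique complex of the compatibility relation is from \emph{Cluster algebras II}, not from \cite{FZ4}.)
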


\begin{proof}
We first observe that the rank $n$ of $\mathcal A$ is determined by the set of cluster variables. This is the transcendence degree of the ambient field. The number of cluster variables is $n(n+3)/2$ in the case $\mathbb A_n$, it is $n^2$ in the case $\mathbb D_n$ and is equal to $42, 70$ or $128$ in the cases $\mathbb E_6$, $\mathbb E_7$ or $\mathbb E_8$, respectively. Therefore the number of cluster variables determines the type. 

Denote now by $\mathcal X$ the set of all cluster variables of $\mathcal A$ and suppose that $\mathcal A_1$ is another cluster algebra  of Dynkin type which has the same set $\mathcal X$ of cluster variables.  Then $\mathcal A$ and $\mathcal A_1$ are of the same type.  
Therefore, we can choose two seeds of these cluster algebras that share the same quiver. 

Let $(\mathbf x, Q)$ and $(\mathbf y, Q)$ be seeds of $\mathcal A$ and $\mathcal A_1$, respectively.  Define $f:\mathcal X\to \mathcal X$ by $f(x_i) = y_i$, where $x_i$ and $y_i$ are cluster variables of $\mathbf x$ and $\mathbf y$, respectively, corresponding to the same point $i\in Q_0.$

Because $\mathbf x$ and $\mathbf y$ are transcendence bases of the ambient field, $f$ extends to a field automorphism. 

Note that the variables obtained by successive mutations of $(\mathbf y, Q)$ are precisely the images under $f$ of the variables obtained by the corresponding mutations of $(\mathbf x, Q)$. Thus we obtain that $\mathcal X = f(\mathcal X)$ and therefore $f$ is an automorphism of the ambient field permuting the elements of $\mathcal X$. By Theorem \ref{theorem1} for algebras of Dynkin type, $f$ is a cluster automorphism. 

Thus  the image $\mathbf y$ under $f$ of the cluster $\mathbf x$, is a cluster in $\mathcal{A}$, and therefore $\mathcal A=\mathcal A_1$ as cluster algebras, that is their sets of clusters coincide. 
\end{proof}

Next we show Conjecture \ref{conjecture2} for cluster algebras of rank $2$. 

\begin{theorem}
Let $\mathcal A$ be a cluster algebra of rank $2$. Then $\mathcal A$ is unistructural. 
\end{theorem}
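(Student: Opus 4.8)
The plan is to reduce the rank-$2$ case to the machinery already developed. A cluster algebra of rank $2$ is of type $\mathbb{A}_1\times\mathbb{A}_1$, $\mathbb{A}_2$, $\mathbb{B}_2=\mathbb{C}_2$, $\mathbb{G}_2$ (all finite type), or of affine/wild type $\widetilde{\mathbb{A}}_1$ (the Kronecker case, $b_{12}b_{21}\ge 4$); but in all cases $n=2$, and when the algebra is of finite type the number of cluster variables distinguishes the type exactly as in the Dynkin argument of Theorem \ref{thm_DynkinUni}. So first I would argue that the rank and the ``type'' (here meaning: the Cartan-like data, or at least enough to fix a common quiver) are determined by the set $\mathcal{X}$ of cluster variables — the rank is the transcendence degree of the ambient field, and the finite/infinite dichotomy together with the cardinality of $\mathcal{X}$ in the finite case pins things down; in the infinite (rank $2$) case, $\mathcal{X}$ is infinite and, up to relabelling, the exchange matrix is $\begin{pmatrix}0&b\\-c&0\end{pmatrix}$ with $bc\ge 4$, and I would need to recover $b$ and $c$ from $\mathcal{X}$ (for instance from the denominator vectors of cluster variables, which grow at a rate governed by $bc$).

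Second, having matched the quivers, I would run exactly the argument of Theorem \ref{thm_DynkinUni}: given a second cluster algebra $\mathcal{A}_1$ with the same $\mathcal{X}$, pick seeds $(\mathbf{x},Q)$ and $(\mathbf{y},Q)$ of $\mathcal{A}$ and $\mathcal{A}_1$ sharing the common quiver $Q$, define $f(x_i)=y_i$ on the two points, extend $f$ to a field automorphism of $\mathcal{F}=\mathbb{Q}(x_1,x_2)$ (possible since each of $\mathbf{x}$, $\mathbf{y}$ is a transcendence basis), and observe that mutation commutes with $f$ because the two seeds have the same quiver, whence $f(\mathcal{X})=\mathcal{X}$. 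So $f$ is an automorphism of the ambient field restricting to a permutation of the cluster variables, and by Theorem \ref{theorem1} in its rank-$2$ incarnation (Lemma \ref{lemma_5.1}), $f$ is a cluster automorphism. Therefore $f$ carries the cluster $\mathbf{x}$ to a cluster of $\mathcal{A}$, namely $\mathbf{y}$, and since cluster automorphisms send clusters to clusters and respect mutation, the exchange graphs and the sets of clusters of $\mathcal{A}$ and $\mathcal{A}_1$ coincide; this is precisely unistructurality.

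Finally, a subtlety to address: unistructurality as defined does not assume $\mathcal{A}_1$ is a priori ``of rank $2$ type'' in any cooked sense — it allows an arbitrary subset $\mathbf{x}'\subseteq\mathcal{X}$ and arbitrary quiver $Q'$ generating the same $\mathcal{X}$ — but the remark after the definition already forces $\mathcal{A}(\mathbf{x}',Q')$ to have rank $2$ (same transcendence degree), and a rank-$2$ quiver is automatically acyclic (no loops, no $2$-cycles, two vertices), so $\mathcal{A}(\mathbf{x}',Q')$ is acyclic of rank $2$ and Lemma \ref{lemma_5.1} / Corollary \ref{cor 2.2} apply to it. The main obstacle I anticipate is the infinite-type (Kronecker and its $b,c$ generalizations) bookkeeping in the first step: in the finite case the cardinality count is immediate, but in the infinite case one must genuinely extract the exchange matrix entries $b,c$ from the intrinsic data of $\mathcal{X}$ (equivalently, show that two rank-$2$ cluster algebras with different $bc$ cannot share a common set of cluster variables) — this is where I would spend most of the effort, likely via the recursion on denominator vectors $d_m$ satisfying $d_{m+1}=\max(b\,d_m^{(2)}, c\,d_m^{(1)})-d_{m-1}$-type relations, whose asymptotic behaviour detects $bc$.
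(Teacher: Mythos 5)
Your overall strategy --- first match the quivers of the two structures, then run the field-automorphism argument and invoke Lemma \ref{lemma_5.1} --- is genuinely different from the paper's, and its finite-type half is fine (the paper likewise delegates $r\le 1$ to Theorem \ref{thm_DynkinUni}). But in the infinite case your argument has a real gap exactly where you flag it: you never prove that the number of arrows of the quiver $Q'$ of the alternative structure equals the number $r$ of arrows of $Q$. Without that you cannot choose seeds $(\mathbf{x},Q)$ and $(\mathbf{y},Q)$ with a common quiver, the assignment $f(x_i)=y_i$ need not satisfy $f(\mathcal{X})=\mathcal{X}$, and the entire second step collapses. The sketched route via growth of denominator vectors is moreover circular as stated: denominator vectors are computed relative to a chosen cluster, and which pairs of elements of $\mathcal{X}$ form clusters of the second structure is precisely what is unknown; one would have to show that the denominator asymptotics with respect to $(x_1,x_2)$ and with respect to $(y_1,y_2)$ agree, which is itself a nontrivial claim needing proof. (A minor point: $\mathbb{B}_2$ and $\mathbb{G}_2$ do not occur in this paper's skew-symmetric setting; a rank-$2$ quiver is $1\overset{r}{\rightarrow}2$ and finite type means $r\le 1$.)

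The paper's proof sidesteps this issue entirely. For $r\ge 2$ it uses the Lee--Schiffler rank-$2$ expansion formula to show that the only cluster variables not having both $x_1$ and $x_2$ in their denominators are $x_1$, $x_2$, $(x_1^r+1)/x_2$ and $(x_2^r+1)/x_1$, so that exactly two clusters of $\mathcal{A}$ contain $x_1$. Then, given a hypothetical cluster $(x_1,y)$ of the second structure with $y=P(x_1,x_2)/(x_1^{d_1}x_2^{d_2})$ and $d_1>0$, it mutates at $y$: the new variable $(x_1^m+1)x_1^{d_1}x_2^{d_2}/P(x_1,x_2)$ must again lie in $\mathcal{X}$, so the Laurent phenomenon forces $P$ to divide $(x_1^m+1)$ times a monomial, and a short case analysis yields a contradiction. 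Crucially, the number $m$ of arrows of $Q'$ enters that computation but is never assumed equal to $r$, so no ``recover the exchange matrix from $\mathcal{X}$'' step is needed. To salvage your approach you would have to supply that recovery argument in full; otherwise the direct route of the paper is the safer one.
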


\begin{proof}
Let us fix a cluster structure on $\mathcal A$. Let $((x_1,x_2), Q)$ be the initial seed, where $Q$ denotes the quiver $Q= 1 \overset{r}{\rightarrow}2$ with $r\geq 0$ the number of arrows from $1$ to $2$. If $r=0$ or $r=1$, the algebra is of finite type and the result follows from Theorem \ref{thm_DynkinUni}. Let $r\geq 2$. Using the expansion formula of \cite{LS1}, we see that the only cluster variables that do not have both $x_1$ and $x_2$ in the denominator are $x_1, x_2, \frac{x_1^r+1}{x_2}, \frac{x_2^r+1}{x_1}$. Moreover there are exactly two clusters containing $x_1$, namely $(x_1, x_2)$ and $(x_1,\frac{x_1^r+1}{x_2})$. Suppose there is another cluster structure of rank 2 on $\mathcal A$ with the same set of variables. Then there exists a cluster $(x_1, y)$ with 
\begin{equation*}
y=\mathcal L(y, (x_1,x_2)) = \frac{P(x_1,x_2)}{x_1^{d_1} x_2^{d_2}}
\end{equation*}
such that $d_1>0$ and $P$ is a polynomial in $\mathbb Z[x_1, x_2]$ which is not divisible by $x_1$. Mutating this cluster in direction $y$ produces the following cluster
\begin{equation*}
(x_1, \frac{x_1^m+1}{P(x_1,x_2)} x_1^{d_1}x_2^{d_2})
\end{equation*}
where $m$ is the number of arrows in the quiver of the cluster $(x_1,y)$ in the second cluster structure. The Laurent phenomenon implies that 
\begin{equation*}
P(x_1,x_2) = Q(x_1,x_2) M(x_1,x_2),
\end{equation*}
where $M$ is a monomial and $Q$ is a divisor of $x^m_1+1$ in $\mathbb Z[x_1,x_2]$. Moreover, $x_1$ does not divide $M$ because it does not divide $P$. We consider two cases. If $M=1$ then $P=Q$ and so our cluster is
\begin{equation*}
(x_1, \frac{x_1^m+1}{P(x_1,x_2)} x_1^{d_1}x_2^{d_2}) = (x_1, P'(x_1,x_2))
\end{equation*}
where $P'$ is a polynomial in $\mathbb Z[x_1,x_2]$. Because of our description of the cluster variables, it follows that $P' = x_2$. If $M\neq 1$ then, because $x_1$ does not divide $M$, $M=x_2^{e_2}$ with $e_2\geq 1$. Thus the denominator of the cluster variable 
$\frac{x_1^m+1}{P(x_1,x_2)} x_1^{d_1}x_2^{d_2}$ is equal to $x_2^{-d_2+e_2}$. Again because of our description of the cluster variables, this cluster variable is equal to $\frac{x_1^r+1}{x_2}$. This implies that $d_1=0$, a contradiction. 
\end{proof}

{\bf Acknowledgements.} The first and third authors gratefully acknowledge partial support from NSERC of Canada, FQRNT and the University of Sherbrooke. 
The second author was  supported by NSF grants 1001637 and 1254567 and by the University of Connecticut.

\end{document}